\documentclass[oneside,english,reqno,11pt]{amsart}
\PassOptionsToPackage{backgroundcolor=green!40}{todonotes}
\usepackage{amstext}
\usepackage{enumitem}
\usepackage{amsthm}
\usepackage{libertineRoman}
\usepackage{helvet}
\usepackage{libertineMono}
\usepackage[libertine]{newtxmath}
\usepackage[T1]{fontenc}
\usepackage{geometry}
\geometry{verbose,tmargin=1in,bmargin=1in,lmargin=1.25in,rmargin=1.25in}
\synctex=-1
\usepackage{color}
\usepackage{babel}
\usepackage{mathrsfs}
\usepackage{mathtools}
\usepackage{mleftright}
\mleftright
\usepackage[unicode=true,pdfusetitle,
 bookmarks=true,bookmarksnumbered=false,bookmarksopen=false,
 breaklinks=true,pdfborder={0 0 0},pdfborderstyle={},backref=false,colorlinks=true]
 {hyperref}
\hypersetup{
 allcolors=blue}

\DeclareFontFamily{U}{matha}{\hyphenchar\font45}
\DeclareFontShape{U}{matha}{m}{n}{
      <5> <6> <7> <8> <9> <10> gen * matha
      <10.95> matha10 <12> <14.4> <17.28> <20.74> <24.88> matha12
      }{}
\DeclareSymbolFont{matha}{U}{matha}{m}{n}
\DeclareFontSubstitution{U}{matha}{m}{n}

\DeclareFontFamily{U}{mathx}{\hyphenchar\font45}
\DeclareFontShape{U}{mathx}{m}{n}{
      <5> <6> <7> <8> <9> <10>
      <10.95> <12> <14.4> <17.28> <20.74> <24.88>
      mathx10
      }{}
\DeclareSymbolFont{mathx}{U}{mathx}{m}{n}
\DeclareFontSubstitution{U}{mathx}{m}{n}

\DeclareMathDelimiter{\vvvert}{0}{matha}{"7E}{mathx}{"17}

\hypersetup{final}
\usepackage{aligned-overset}
\urlstyle{same}

\usepackage[capitalize,nosort,noabbrev]{cleveref}
\Crefname{assumption}{Assumption}{Assumptions}
\Crefname{prop}{Proposition}{Propositions}
\Crefname{thmstepnvi}{Step}{Steps}
\Crefname{thmstepnvii}{Step}{Steps}
\Crefname{lem}{Lemma}{Lemmas}
\Crefname{thm}{Theorem}{Theorems}
\Crefname{prop}{Proposition}{Propositions}
\Crefname{fig}{Figure}{Figures}

\crefdefaultlabelformat{#2\textup{#1}#3}
\crefformat{equation}{\textup{(#2#1#3)}}
\crefmultiformat{equation}{(#2#1#3)}%
{ and~(#2#1#3)}{, (#2#1#3)}{ and~(#2#1#3)}
\crefrangeformat{equation}{(#3#1#4--#5#2#6)}

\numberwithin{equation}{section}
\numberwithin{figure}{section}
\numberwithin{table}{section}
\theoremstyle{plain}
\newtheorem{thm}{\protect\theoremname}[section]
\theoremstyle{plain}
\newtheorem{assumption}[thm]{\protect\assumptionname}
\theoremstyle{remark}
\newtheorem{rem}[thm]{\protect\remarkname}
\theoremstyle{plain}
\newtheorem{lem}[thm]{\protect\lemmaname}
\theoremstyle{plain}
\newtheorem{prop}[thm]{\protect\propositionname}
\theoremstyle{plain}
\newtheorem{cor}[thm]{\protect\corollaryname}

\providecommand{\assumptionname}{Assumption}
\providecommand{\corollaryname}{Corollary}
\providecommand{\lemmaname}{Lemma}
\providecommand{\propositionname}{Proposition}
\providecommand{\remarkname}{Remark}
\providecommand{\theoremname}{Theorem}

\begin{document}
\global\long\def\supp{\operatorname{supp}}%

\global\long\def\Uniform{\operatorname{Uniform}}%

\global\long\def\dif{\mathrm{d}}%

\global\long\def\e{\mathrm{e}}%

\global\long\def\ii{\mathrm{i}}%

\newcommand{\ssup}[1] {{\scriptscriptstyle{({#1}})}}

\global\long\def\Cov{\operatorname{Cov}}%

\global\long\def\Var{\operatorname{Var}}%

\global\long\def\pv{\operatorname{p.v.}}%

\global\long\def\e{\mathrm{e}}%

\global\long\def\p{\mathrm{p}}%

\global\long\def\R{\mathbf{R}}%

\global\long\def\Law{\operatorname{Law}}%

\global\long\def\supp{\operatorname{supp}}%

\global\long\def\image{\operatorname{image}}%

\global\long\def\dif{\mathrm{d}}%

\global\long\def\eps{\varepsilon}%

\global\long\def\sgn{\operatorname{sgn}}%

\global\long\def\tr{\operatorname{tr}}%

\global\long\def\Hess{\operatorname{Hess}}%

\global\long\def\Re{\operatorname{Re}}%

\global\long\def\Im{\operatorname{Im}}%

\global\long\def\Dif{\operatorname{D}}%

\global\long\def\divg{\operatorname{div}}%

\global\long\def\arsinh{\operatorname{arsinh}}%

\global\long\def\sech{\operatorname{sech}}%

\global\long\def\erf{\operatorname{erf}}%

\global\long\def\Cauchy{\operatorname{Cauchy}}%

\title{Additive-multiplicative stochastic heat equations, 
stationary solutions, and Cauchy statistics}
\author{Alexander Dunlap\and Chiranjib Mukherjee}
\thanks{A.D. was partially supported by an NSF Mathematical Sciences Postdoctoral Research Fellowship under Grant No.\ DMS-2002118 (held at NYU Courant). Research of C.M. is supported by the Deutsche Forschungsgemeinschaft (DFG) under Germany's Excellence Strategy EXC 2044-390685587, Mathematics M\"unster: Dynamics-Geometry-Structure.}
\begin{abstract}
    We study long-term behavior and stationary distributions for stochastic heat equations forced simultaneously by a multiplicative noise and an independent additive noise with the same distribution. We prove that nontrivial space-time translation-invariant measures exist for all values of the parameters. We also show that if the multiplicative noise is sufficiently strong, the invariant measure has Cauchy-distributed marginals. Using the same techniques, we prove a similar result on Cauchy-distributed marginals for a logarithmically attenuated version of the problem in two spatial dimensions. The proofs rely on stochastic analysis and elementary potential theory.
\end{abstract}

\address{(A.D.) Department of Mathematics, Duke University,
120 Science Dr., Durham, NC 27708,
USA.}
\email{alexander.dunlap@duke.edu}
\address{(C.M.) Fachbereich Mathematik und Informatik, Universität Münster, Einsteinstr. 62, Münster 48149, Germany.}
\email{chiranjib.mukherjee@uni-muenster.de}
\date{\today}
\maketitle

\section{Introduction and main results}

\subsection{Introduction} Let $(\Omega,\mathscr{H},\mathbb{P})$ be a probability space. Let
$d\in\mathbb{N}$, and let $\mathcal{X}$ be either $\mathbb{R}^{d}$
or $(\mathbb{R}/\mathbb{Z})^{d}$. Let $(\dif U_{t}(x))$ and $(\dif V_{t}(x))$
be iid homogeneous centered Gaussian noises on $\mathcal{X}$, with the
(formal) covariance kernels
\begin{equation}
    \mathbb{E}[\dif U_{t}(x)\dif U_{t'}(x')]=\mathbb{E}[\dif V_{t}(x)\dif V_{t'}(x')]=\delta(t-t')R(x-x'),\quad t,t'\in\mathbb{R},x,x'\in\mathcal{X}\label{eq:UVQV}
\end{equation}
and
\begin{equation}
    \mathbb{E}[\dif U_{t}(x)\dif V_{t'}(x')]=0,\qquad t,t'\in\mathbb{R},x,x'\in\mathcal{X}.\label{eq:uncorrelated}
\end{equation}
Here $R$ is a positive-definite covariance kernel. Precisely, the noises can be obtained by spatial convolution of a space-time white noise with a mollifier $\phi$ such that $\phi*\phi=R$. (See e.g.\ \cite{Wal86,Kho14} for preliminaries on space-time white noise.) We let $\mathscr{F}$
and $\mathscr{G}$ denote the $\sigma$-algebras generated by $(\dif U_{t}(x))$
and by $(\dif V_{t}(x))$, respectively. Throughout the paper, we assume that the following assumption on $\phi$ is in force (but see \cref{rem:assumption} below):
\begin{assumption}\label{assu:R}
We assume that either $\phi$ (and hence $R$) is smooth and compactly supported and that $\phi(x)\ge 0$ for all $x\in\mathcal{X}$, or
else that $d=1$ and $\phi=\delta$ (in which case $R=\delta$ as well and so $(\dif U_{t})$ and
$(\dif V_{t})$ are space-time white noises).
\end{assumption}

We fix $\alpha,\beta\ge0$ and consider the stochastic heat equation
with a combination of additive and multiplicative noise, which we term the \emph{additive-multiplicative stochastic heat equation} (AMSHE):
\begin{align}
\dif v_{t}(x) & =\frac{1}{2}\Delta v_{t}(x)\dif t+\beta v_{t}(x)\dif U_{t}(x)+\alpha\dif V_{t}(x),\qquad t\in\mathbb{R},x\in\mathcal{X}.\label{eq:theSPDE}
\end{align}
If $\beta=0$, then this reduces to the additive stochastic heat equation,
with Gaussian solutions \cite[Chapter 3]{Kho14}. If $\alpha=0$, then this reduces to the
multiplicative stochastic heat equation (MSHE, also called the parabolic
Anderson model), which is solved by the partition function of an associated
directed polymer \cite{Com17}.

In the present note we will be interested in the
long-time behavior of the problem, primarily in the new case $\alpha,\beta>0$. We summarize our most important results as follows:
\begin{itemize}
    \item We will show (\cref{thm:convergence} below) that the problem admits (statistically) space-time stationary solutions in any dimension and for any choice of $\alpha\ge 0$ and $\beta>0$. The space-time stationary solutions we construct arise as the long-time limits of the equation started with zero initial data.
    \item We will show (\cref{thm:stsproperties}(\ref{enu:strongcase}) below) that, whenever 
        $\beta$ and $\mathcal{X}$ are such that the only space-time stationary solution for the corresponding problem with $\alpha=0$ is zero (known as the strong-noise regime), then the space-time stationary solution described above yields a Cauchy-distributed random variable when tested against any positive measure. By contrast, in the bulk of the complementary weak-noise regime, the marginals of the space-time stationary solution admit finite $p$th moments for some $p>1$; see \cref{thm:stsproperties}(\ref{enu:Lpcase}) below.
        \item In the case $d=2$, $\mathcal{X}=\mathbb{R}^2$, in \cref{thm:2dthm}         
        we furthermore prove that Cauchy-distributed one-point statistics arise when $R$ is taken to be an approximation of $\delta$ at scale $\eps>0$ and $\alpha$ and $\beta$ are each attenuated by a factor of $\sqrt{\log\eps^{-1}}$, in the regime when $\beta\ge \sqrt{2\pi}$. This is the critical scaling and critical temperature studied in the case $\alpha =0$ in \cite{BC98,CSZ17,CSZ19,GQT19,CSZ21}.  In $d=2$, any positive $\beta$ is in the strong noise regime, and so this scaling should be thought of as ``blowing up'' the phase transition at $\beta =0$.
    \end{itemize}

    We find the appearance of the exact Cauchy distribution rather surprising, especially as it arises in the setting of invariant measures for infinite-dimensional stochastic evolution equations. We emphasize that \emph{no rescaling} is necessary to obtain the Cauchy statistics in the strong-noise regime. We are thus in the unusual position to be able to compute exact non-Gaussian marginal distributions of the invariant measure of a stochastic PDE, and moreover one in which the noise covariance $R$ is not required to be of a particular form (e.g.\ $R=\delta$) to produce an integrable structure. We also emphasize that we do \emph{not} expect multipoint distributions of the spacetime-stationary solutions to be jointly Cauchy, although we prove that \emph{positive} linear combinations of the values of the spacetime-stationary solution at a single time are Cauchy; see \cref{rem:samesign} below.

We see the AMSHE as a simple case of a stochastic heat equation with some multiplicative structure, but in which the solutions are not required to remain positive. 
We expect that nonlinear stochastic heat equations, of the form e.g.\ \cref{eq:theSPDE-2} below, admit similar invariant measures. However, except perhaps for special choices of the nonlinearity and/or noise covariance kernel $R$, we believe that the appearance of exact Cauchy distributions for such nonlinear problems is unlikely. See \cref{rem-nonlinear} below. This work therefore sheds light on the respective roles of the behavior of the nonlinearity at $0$ and at infinity.

\subsection{Main results} Now we precisely state our main results.
We need to introduce some function spaces. For
a non-negative ``weight'' function $w\in\mathcal{C}(\mathcal{X};\mathbb{R}_{\ge0})$
and an exponent $p\in[1,\infty)$, we define the weighted $L^{p}$
space $L_{w}^{p}(\mathcal{X})$ with the norm
\begin{equation}
\|z\|_{L_{w}^{p}(\mathcal{X})}^{p}=\int_{\mathcal{X}}|z(x)|^{p}w(x)^{p}\,\dif x.\label{eq:weightednormdef}
\end{equation}
Also, for $\xi>0$, define the weight
\begin{equation}
    w_{\mathcal{X};\xi}(x)=\begin{cases}
1, & \mathcal{X}=(\mathbb{R}/\mathbb{Z})^{d};\\
\e^{-\xi|x|}, & \mathcal{X}=\mathbb{R}^{d}.
\end{cases}\label{eq:wdef}
\end{equation}
Our first main result concerns the long-time behavior of solutions
to \cref{eq:theSPDE}. Let $v^{\ssup T}$ solve \cref{eq:theSPDE}
with initial condition $0$ at time $-T$:
\begin{equation}
v_{-T}^{\ssup T}(x)=0\qquad\text{for all }x\in\mathcal{X}.\label{eq:vTic-1}
\end{equation}

\begin{thm}
\label{thm:convergence}%
For any $\alpha\ge0$ and any $\beta>0$,
there is a space-time stationary solution $(\tilde{v}_{t}^{\ssup{\alpha,\beta}})_t$
to \cref{eq:theSPDE}, with $\tilde{v}_{t}^{\ssup{\alpha,\beta}}\in\bigcap_{p\in [1,\infty),\xi>0} L^p_{w_{\mathcal{X};\xi}}(\mathcal{X})$ a.s. for each $t$, such that for each $t\in\mathbb{R}$, $\xi>0$, and $p\in[1,\infty)$, we have
\[
    \lim_{T\to\infty}\|v_{t}^{\ssup T}-\tilde{v}_{t}^{\ssup{\alpha,\beta}}\|_{L_{w_{\mathcal{X};\xi}}^{p}(\mathcal{X})}=0\qquad\text{in probability.}
\]
\end{thm}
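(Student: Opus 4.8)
The plan is to condition throughout on the $\sigma$-algebra $\mathscr{F}$ generated by the multiplicative noise $(\dif U_t)$, which renders the problem Gaussian. Since \cref{eq:theSPDE} is linear in $v$, write $\Psi_{s,t}$ for the ($\mathscr{F}$-measurable) solution operator of the homogeneous multiplicative equation $\dif w_t=\tfrac12\Delta w_t\,\dif t+\beta w_t\,\dif U_t$, whose integral kernel is the stochastic heat kernel $Z_{s,t}(x,y)$ (which exists and is a.s.\ positive under \cref{assu:R}). Duhamel's principle applied to the zero initial condition \cref{eq:vTic-1} then gives the mild representation
\[
v_{t}^{\ssup T}(x)=\alpha\int_{-T}^{t}\int_{\mathcal{X}}Z_{s,t}(x,y)\,\dif V_{s}(y)=\alpha\int_{-T}^{t}\int_{\mathcal{X}}\Phi_{s,t}(x,z)\,\dif W_{s}(z),
\]
where $W$ is the space-time white noise with $\dif V_s(y)=\int_{\mathcal{X}}\phi(y-z)\,\dif W_s(z)$ and $\Phi_{s,t}(x,z)=\int_{\mathcal{X}}Z_{s,t}(x,y)\phi(y-z)\,\dif y$. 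Because $V\perp U$, conditionally on $\mathscr{F}$ the right-hand side is a Wiener integral, so $v_t^{\ssup T}(x)$ is centered Gaussian given $\mathscr{F}$; moreover, as $T$ increases, $(v_t^{\ssup T}(x))_T$ is a Gaussian martingale for the filtration $\mathscr{G}_T:=\mathscr{F}\vee\sigma(W_s:s\ge -T)$, with conditional bracket $\sigma_T^2(x)=\Var(v_t^{\ssup T}(x)\mid\mathscr{F})=\alpha^2\int_{-T}^t\|\Phi_{s,t}(x,\cdot)\|_{L^2(\mathcal{X})}^2\,\dif s$ increasing in $T$. The candidate stationary solution is the formal lower limit $\tilde v_t^{\ssup{\alpha,\beta}}(x)=\alpha\int_{-\infty}^t\int_{\mathcal{X}}\Phi_{s,t}(x,z)\,\dif W_s(z)$.

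By the conditional martingale convergence theorem, $v_t^{\ssup T}(x)$ converges a.s.\ and in conditional $L^2$ to a finite limit iff the bracket stays bounded, i.e.\ iff the full conditional variance $\sigma_\infty^2(x):=\lim_{T\to\infty}\sigma_T^2(x)=\alpha^2\int_{-\infty}^t\iint_{\mathcal{X}^2}Z_{s,t}(x,y)R(y-y')Z_{s,t}(x,y')\,\dif y\,\dif y'\,\dif s$ is finite. Everything therefore reduces to the key claim that $\sigma_\infty^2(x)<\infty$ almost surely, with the quantitative strengthening $\mathbb{E}[(\sigma_\infty^2(x))^q]<\infty$ for some $q\in(0,1)$ (uniform in $x$ by spatial stationarity of the noise). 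Granting this, the remaining assembly is routine: conditionally on $\mathscr{F}$ the field $v_t^{\ssup T}-\tilde v_t^{\ssup{\alpha,\beta}}$ is centered Gaussian with pointwise variance $q_T(x):=\sigma_\infty^2(x)-\sigma_T^2(x)\downarrow 0$, so with $c_p=\mathbb{E}|\mathcal{N}(0,1)|^p$ one has $\mathbb{E}[\|v_t^{\ssup T}-\tilde v_t^{\ssup{\alpha,\beta}}\|_{L^p_{w_{\mathcal{X};\xi}}(\mathcal{X})}^p\mid\mathscr{F}]=c_p\int_{\mathcal{X}}q_T(x)^{p/2}w_{\mathcal{X};\xi}(x)^p\,\dif x$, and dominated convergence (with dominating function $\sigma_\infty^2(x)^{p/2}w_{\mathcal{X};\xi}(x)^p$, whose $x$-integral is a.s.\ finite by the fractional-moment bound, the exponential weight, and subadditivity of $s\mapsto s^{q'}$) yields convergence to $0$ in probability. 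The same fractional-moment input, applied with a small power to absorb the exponential weight, shows $\tilde v_t^{\ssup{\alpha,\beta}}\in\bigcap_{p,\xi}L^p_{w_{\mathcal{X};\xi}}(\mathcal{X})$ a.s.

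The main obstacle is the fractional-moment bound $\sup_T\mathbb{E}[(\sigma_T^2(x))^q]<\infty$. Its difficulty is that in the strong-noise regime (all $\beta>0$ when $d\in\{1,2\}$, and large $\beta$ when $d\ge3$) the second moment $\mathbb{E}[\sigma_\infty^2(x)]$ is \emph{infinite}: the two-replica representation shows $\iint\mathbb{E}[Z_{s,t}(x,y)Z_{s,t}(x,y')]R(y-y')\,\dif y\,\dif y'$ grows with $t-s$ because of the strong-disorder growth of the second moment of the normalized stochastic heat kernel, so no first- or second-moment method works directly. Instead I would exploit the subadditivity $(a+b)^q\le a^q+b^q$ for $q\in(0,1)$: splitting the $s$-integral into unit windows and writing $\sigma_T^2(x)$ as a sum over windows, it suffices that the $q$-th moment of the contribution of the window at depth $\tau$ in the past be summable in $\tau$. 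Each such contribution is a local $L^2$-mass of the stochastic heat kernel started a time $\tau$ earlier; combining the diffusive $(t-s)^{-d/2}$ spreading of the heat kernel with the strong-disorder decay of fractional moments of the normalized partition function gives the required summability. In the complementary high-dimensional weak-disorder case the second moment is already finite and summable in $t-s$, so there the claim follows from an elementary computation with the two-replica formula and $q=1$.

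Finally, space-time stationarity of $(\tilde v_t^{\ssup{\alpha,\beta}})_t$ follows because the construction is covariant under joint translations: a shift $(t,x)\mapsto(t+h,x+a)$ transforms $Z_{s,t}$ and the driving noises into the corresponding objects for the shifted noise, which has the same law by the homogeneity recorded in \cref{eq:UVQV,eq:uncorrelated} and of $\phi$. That $\tilde v^{\ssup{\alpha,\beta}}$ solves \cref{eq:theSPDE} is then obtained by passing to the limit $T\to\infty$ in the mild (Duhamel) formulation satisfied by each $v^{\ssup T}$, which is justified by the convergence just established.
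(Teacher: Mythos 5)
Your overall skeleton---condition on $\mathscr{F}$, observe that $v_t^{\ssup T}(x)$ is conditionally centered Gaussian and a martingale in $T$, and reduce the theorem to a.s.\ finiteness of the conditional variance $\sigma_\infty^2(x)$ and of its weighted spatial integral---is indeed the same reduction the paper makes. The gap is in how you propose to prove that finiteness. Your two-case plan (strong disorder: summable windowed fractional moments; weak disorder: two-replica second-moment computation) does not cover all $\beta>0$. Weak disorder does not imply $L^2$-boundedness of the martingale $u_0^{\ssup T}(x)$: by \cref{prop:phase-transition}(\ref{enu:ILprelation}), weak disorder only gives $L^p$-boundedness for \emph{some} $p>1$, and the $L^2$ region is in general strictly smaller than $I_{\mathrm{weak}}$ (and the critical point $\beta=\beta_{\mathrm{c}}$ is uncovered as well). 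In the regime between the $L^2$ threshold and $\beta_{\mathrm{c}}$, the martingale limit $M_\infty$ is strictly positive---so fractional moments of the normalized kernel do \emph{not} decay and your first prong is vacuous---while $\mathbb{E}[\sigma_\infty^2(x)]=\infty$---so your second prong diverges. Moreover, even within strong disorder, the summability in $\tau$ that you need is a genuinely quantitative estimate: for $d=1,2$ the diffusive factor $\tau^{-d/2}$ is not summable on its own, so you would need a summable rate of fractional-moment decay, valid for \emph{every} $\beta>0$ (including $\beta$ arbitrarily small) and uniformly over the settings of \cref{assu:R} (smooth $R$ on $\mathbb{R}^d$ or the torus, and $R=\delta$ in $d=1$). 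No such estimates are cited, and establishing them would be a substantial project in itself; this is exactly the step your sketch waves through. (A smaller issue: subadditivity $(\sum_i a_i)^q\le\sum_i a_i^q$ does not pass directly to integrals over $\mathcal{X}$; one must first discretize into unit cells.)

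The point you are missing is that no moment estimates over $\mathscr{F}$ are needed at all. By \cref{eq:QVrelation}, the conditional variance is exactly a quadratic variation: $\sigma_T^2(x)=(\alpha/\beta)^2[M(\delta_x)]_T$, where $M_T(\delta_x)=u_0^{\ssup T}(x)$ is a \emph{nonnegative} continuous martingale (\cref{prop:martingaleQVs}). A nonnegative continuous martingale converges a.s., and a continuous local martingale converges a.s.\ precisely on the event that its quadratic variation is finite; hence $\sigma_\infty^2(x)<\infty$ a.s.\ for every $\beta>0$, with no case analysis and no quantitative disorder estimates---this is \cref{eq:QVconverges}. For the weighted $L^p$ statement, the paper applies the same observation to the finite measure $\mu=w_{\mathcal{X};\xi/2}\,\dif x$, and uses the Chapman--Kolmogorov factorization \cref{eq:applyCK} together with the kernel bound \cref{eq:Zpinftybd} to dominate $\int_{\mathcal{X}} q_T(x)^{p/2}w_{\mathcal{X};\xi}(x)^p\,\dif x$ (in your notation) by an a.s.\ finite $\mathscr{F}$-measurable factor times the single scalar quadratic-variation tail $[\tilde{M}(w_{\mathcal{X};\xi/2})]_\infty-[\tilde{M}(w_{\mathcal{X};\xi/2})]_T\to 0$ a.s.; convergence in probability then follows from the conditional Markov inequality and bounded convergence. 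If you replace your fractional-moment step by this martingale observation, the rest of your outline---including the stationarity-by-translation-covariance argument, which is fine once the bounds \cref{eq:Zpinftybd,eq:Zpqbd} justify passing to the limit in the Duhamel formula---goes through.
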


When we say that $(\tilde v^{\ssup{\alpha,\beta}}_t)$ is a space-time stationary solution to \cref{eq:theSPDE}, we mean that it solves \cref{eq:theSPDE} and that it is invariant in law under translations in both space and time.

We note that \cref{thm:convergence} is false in general for $\beta = 0$ (the additive stochastic heat equation). In this case, space-time stationary solutions exist when $\mathcal{X}=\mathbb{R}^{d}$ for $d\ge 3$, but not otherwise. In particular, at a fixed time these space-time stationary solutions have the distribution of a smoothed Gaussian free field (GFF), and the GFF can be defined on the whole space only when $d\ge 3$. On the torus, the additive stochastic heat equation has a zero-frequency mode undergoing a Brownian motion, and thus also does not admit an invariant measure in any dimension. The spatial gradient of the GFF can be defined on the whole space in any dimension, however, and indeed space-time stationary solutions for the gradient of the solution to the additive SHE do always exist. %

It is also known that when $\mathcal{X}=\mathbb{R}^{d}$ for $d\ge3$,
there is a phase transition, from so-called weak noise (or weak disorder) to strong noise (disorder), for the MSHE, which
is the case of \cref{eq:theSPDE} with $\beta>0$ and $\alpha=0$. We will exhibit a related phase transition
when $\alpha,\beta>0$. First we recall the parameters associated with
the phase transition. We denote by $(u_{t}^{\ssup T})$ the process solving
the MSHE
\begin{align}
    \dif u_{t}^{\ssup T}(x) & =\frac{1}{2}\Delta u_{t}^{\ssup T}(x)\dif t+\beta u_{t}^{\ssup T}(x)\dif U_{t}(x), &  & t>-T,x\in\mathcal{X};\label{eq:duT}\\
u_{-T}^{\ssup T}(x) & =1, &  & x\in\mathcal{X}.\label{eq:uTic}
\end{align}
It can be shown (see \cref{prop:martingaleQVs} below), and is indeed well-known in the literature, that the process $(u_{0}^{\ssup T}(0))_{T\ge0}$ is a
nonnegative martingale, and so the limit
\begin{equation}
    M_{\infty}\coloneqq\lim_{T\to\infty}u_{0}^{\ssup T}(0)\label{eq:Zinfty}
\end{equation}
exists almost surely. This is the reason for considering the equation started at time $-T$ rather than at time $0$: the solution of the equation started at time $0$ and evaluated at time $T$ would not have this martingale structure as $T$ is varied.

The following proposition summarizes known results about the phase transition; see \cref{sec:phase-transition} for detailed references.
\begin{prop}\label{prop:phase-transition}
There are two disjoint intervals $I_{\mathrm{weak}}=I_{\mathrm{weak}}(\mathcal{X},R)$
and $I_{\mathrm{strong}}=I_{\mathrm{strong}}(\mathcal{X},R)$ such
that $I_{\mathrm{weak}}\cup I_{\mathrm{strong}}=[0,\infty)$ and the
following properties hold:
\begin{enumerate}
    \item\label{enu:weak} If $\beta\in I_{\mathrm{weak}}$, then the martingale $(u_{0}^{\ssup T}(0))_{T\ge0}$
        is uniformly integrable and $M_{\infty}$ is nonzero almost surely.
In this case we say that \emph{weak disorder} holds.
\item\label{enu:strong} If $\beta\in I_{\mathrm{strong}}$, then $M_{\infty}=0$ almost surely.
In this case we say that \emph{strong disorder} holds.
\end{enumerate}
Define $\beta_{\mathrm{c}}\coloneqq\sup I_{\mathrm{weak}}=\inf I_{\mathrm{strong}}$. Then we additionally have
\begin{enumerate}[resume]
    \item\label{enu:dge3} If $\mathcal{X}=\mathbb{R}^{d}$ with $d\ge3$, then $\beta_{\mathrm{c}}>0$.
    \item\label{enu:dle2} If $\mathcal{X}$ is $\mathbb{R}^{d}$ for $d\le 2$
or $(\mathbb{R}/\mathbb{Z})^{d}$ for any $d$, then $\beta_{\mathrm{c}}=0$.
\item\label{enu:ILprelation} Let \begin{equation}I_{L^p} = I_{L^p}(\mathcal{X},R) \coloneqq \{\beta\ge 0\ :\ \text{the martingale }(u_0^{\ssup T})_T\text{ is bounded in }L^p(\Omega)\}.\label{eq:ILpdef}\end{equation} Then we have %
\begin{equation}\label{eq:ILprelation}
[0,\beta_{\mathrm{c}})\subseteq\bigcup_{p\in(1,\infty)}I_{L^{p}}\subseteq [0,\beta_{\mathrm{c}}].
\end{equation}
\end{enumerate}
\end{prop}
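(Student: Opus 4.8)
The plan is to build the proposition out of four ingredients: the martingale convergence theorem, a zero--one law, a second-moment (replica) computation, and the localization and $L^p$-characterization results for directed polymers collected in \cref{sec:phase-transition}.

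I would first establish the dichotomy behind items~(\ref{enu:weak})--(\ref{enu:strong}). By \cref{prop:martingaleQVs}, $(u_0^{\ssup T}(0))_{T\ge0}$ is a nonnegative martingale for the filtration $\mathscr{G}_T=\sigma(\dif U_s:-T\le s\le0)$, so $M_\infty$ exists a.s.\ with $\mathbb{E}[M_\infty]\le1$. The flow decomposition $u_0^{\ssup T}(0)=\int_{\mathcal{X}}\mathcal{U}_{-S,0}(0,y)\,u_{-S}^{\ssup T}(y)\,\dif y$, in which the propagator $\mathcal{U}_{-S,0}$ is strictly positive and measurable with respect to the noise on $[-S,0]$, shows that $M_\infty$ vanishes if and only if the field propagated from the arbitrarily distant past is already zero before the fixed time $-S$; hence $\{M_\infty=0\}$ lies in the tail $\sigma$-field $\bigcap_{S>0}\sigma(\dif U_s:s\le-S)$, and Kolmogorov's zero--one law gives $\mathbb{P}(M_\infty=0)\in\{0,1\}$. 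Defining $I_{\mathrm{strong}}$ and $I_{\mathrm{weak}}$ to be the sets of $\beta$ for which $M_\infty=0$ a.s., respectively $M_\infty>0$ a.s., partitions $[0,\infty)$; on $I_{\mathrm{weak}}$ the a.s.\ positivity forces $\mathbb{E}[M_\infty]=1$, i.e.\ uniform integrability, which is item~(\ref{enu:weak}), while item~(\ref{enu:strong}) is the definition of $I_{\mathrm{strong}}$. That these two sets are intervals---so that $\beta_{\mathrm{c}}\coloneqq\sup I_{\mathrm{weak}}=\inf I_{\mathrm{strong}}$ is well defined---follows from the monotonicity of strong disorder in $\beta$, which for Gaussian environments can be obtained by coupling the stronger noise to the weaker one through an independent Gaussian increment; I would cite the classical references in \cref{sec:phase-transition}.

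The two borderline items follow from a single replica identity. Carrying out the Gaussian expectation over the noise gives
\[
\mathbb{E}\big[(u_0^{\ssup T}(0))^2\big]=\mathbb{E}_{B,\tilde B}\Big[\exp\Big(\beta^2\int_{-T}^0 R(B_s-\tilde B_s)\,\dif s\Big)\Big],
\]
with $B,\tilde B$ independent Brownian motions, so $B-\tilde B$ is a single Brownian motion run at twice the speed. For item~(\ref{enu:dge3}), when $\mathcal{X}=\mathbb{R}^d$ with $d\ge3$ and $R$ is smooth and compactly supported, this difference process is transient and $\sup_x\mathbb{E}_x\int_0^\infty R(W_s)\,\dif s=\|G*R\|_{L^\infty}<\infty$, where $G$ is the Green's function; Khasminskii's lemma then bounds the exponential moment above uniformly in $T$ for all small enough $\beta$, so the martingale is $L^2$-bounded and $\beta\in I_{\mathrm{weak}}$, whence $\beta_{\mathrm{c}}>0$. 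For item~(\ref{enu:dle2}), recurrence of $B-\tilde B$ on $\mathbb{R}^d$ for $d\le2$, and compactness of the torus in any dimension, force $\int_0^\infty R(B_s-\tilde B_s)\,\dif s=\infty$ a.s., so the second moment diverges for every $\beta>0$; upgrading this to the genuine conclusion $M_\infty=0$ requires the localization estimates for recurrent directed polymers, which I would import from \cref{sec:phase-transition}.

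Finally, for item~(\ref{enu:ILprelation}) the right-hand inclusion is soft: $L^p$-boundedness for some $p>1$ forces uniform integrability and hence $M_\infty>0$ a.s., placing $\beta$ in $I_{\mathrm{weak}}\subseteq[0,\beta_{\mathrm{c}}]$, so $\bigcup_{p>1}I_{L^p}\subseteq[0,\beta_{\mathrm{c}}]$. The left-hand inclusion $[0,\beta_{\mathrm{c}})\subseteq\bigcup_{p>1}I_{L^p}$ asserts that strictly inside the weak-disorder phase one automatically gains $L^p$-boundedness for some $p>1$; this is precisely the recent characterization of the open weak-disorder interval, which I would cite directly (see \cref{sec:phase-transition}). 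I expect this last inclusion to be the main obstacle, since it is the only ingredient that goes beyond elementary potential theory and standard martingale theory---the second-moment bounds and the zero--one law are routine, whereas the sharp passage from mere uniform integrability to quantitative $L^p$ control sits much deeper.
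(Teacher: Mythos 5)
Your overall architecture matches the paper's: items (\ref{enu:weak})--(\ref{enu:dge3}) and (\ref{enu:ILprelation}) are, in the paper too, handled by citation (the dichotomy and the $d\ge3$ statement via \cite{MSZ16}, the left inclusion in (\ref{enu:ILprelation}) via \cite{RLM22}, extending \cite{Jun22b}), and your sketches of the standard arguments behind those citations --- the zero--one law via the Chapman--Kolmogorov factorization, the Gaussian-increment coupling for monotonicity in $\beta$, Khasminskii's lemma for the $L^2$ bound in $d\ge3$ --- are sound. The genuine gap is in item (\ref{enu:dle2}), and it sits exactly where the paper has to do its only real work. For $\mathcal{X}=\mathbb{R}^d$ with $d\in\{1,2\}$ and smooth, compactly supported $R$, you correctly observe that divergence of the second moment proves nothing by itself, and you then propose to ``import localization estimates for recurrent directed polymers'' from the references collected in \cref{sec:phase-transition}. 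But the paper states explicitly that no such reference exists for this precise setting: the available results cover the torus \cite{GK23}, space-time white noise in $d=1$ \cite{QS23}, and discrete or Poissonian environments \cite{CY05} --- not smooth Gaussian noise on all of $\mathbb{R}^1$ or $\mathbb{R}^2$. The appendix therefore constructs the argument itself, by a fractional-moment method following \cite{CY05}: setting $m(T)=\mathbb{E}[M_T^\theta]$ for $\theta\in(0,1)$, It\^o's formula applied to the representation \cref{eq:MmuIto} gives $m'(T)=-\tfrac12\beta^2\theta(1-\theta)\,\mathbb{E}\bigl[M_T^\theta\cdot(\text{normalized replica overlap})\bigr]$, the overlap is bounded below by restricting the spatial integrals to a ball of radius $\ell_T$ and applying Cauchy--Schwarz, and the resulting differential inequality forces $m(T)\to0$, hence $M_\infty=0$ a.s. Your proposal contains no substitute for this step, so as written it does not prove item (\ref{enu:dle2}) in that case; deferring to \cref{sec:phase-transition} is also circular, since that section exists precisely to prove this proposition.

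A secondary gap: in your first paragraph you assert that a.s.\ positivity of $M_\infty$ ``forces $\mathbb{E}[M_\infty]=1$, i.e.\ uniform integrability.'' That implication is false for general nonnegative martingales: take $M_T=(1+A_T)/2$ with $A$ a nonnegative, non-UI martingale satisfying $A_0=1$ and $A_\infty=0$ a.s.\ (e.g.\ $A_T=\e^{B_T-T/2}$); then $M_\infty=\tfrac12>0$ a.s.\ while $\mathbb{E}[M_\infty]=\tfrac12<1=\mathbb{E}[M_0]$. That weak disorder (positivity of the limit) implies uniform integrability of the polymer martingale is a genuine theorem, proved e.g.\ by size-biasing/spine arguments, and it is part of what the citation to \cite{MSZ16} carries in the paper. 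Since item (\ref{enu:weak}) of the proposition asserts UI, your self-contained derivation of the dichotomy needs that argument rather than only martingale convergence and the zero--one law; alternatively, simply cite it, as the paper does.
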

\begin{rem}
The behavior of the problem at $\beta=\beta_{\mathrm{c}}$ has not been studied. Very recent results %
\cite{JL24a,JL24b} in the discrete setting suggest that %
the second inclusion in \cref{eq:ILprelation} is in fact an identity. %
\end{rem}

The phase transition associated with the MSHE
\cref{eq:duT} has implications for the invariant measures for the AMSHE as well. This is the content
of our second main theorem.
\begin{thm}
\label{thm:stsproperties}Let $\alpha\ge0$ and $\beta>0$. Depending
on the value of $\beta$, we make the following claims:
\begin{enumerate}
\item \label{enu:Lpcase}If $p\in(1,\infty)$ and $\beta\in I_{L^{p}}$,
then for each $x\in\mathcal{X}$ we have $\tilde{v}_{0}^{\ssup{\alpha,\beta}}(x)\in L^{p}(\Omega)$
and
\begin{equation}
\lim_{T\to\infty}\mathbb{E}|v_{0}^{\ssup T}(x)-\tilde{v}_{0}^{\ssup{\alpha,\beta}}(x)|^{p}=0.\label{eq:Lpstatement}
\end{equation}
Hence, in particular,
\[
\mathbb{E}|\tilde{v}_{0}^{\ssup{\alpha,\beta}}(x)|^{p}<\infty\qquad\text{and}\qquad\mathbb{E}[\tilde{v}_{0}^{\ssup{\alpha,\beta}}(x)]=0.
\]
    \item \label{enu:strongcase}If $\beta\in I_{\mathrm{strong}}$, then whenever $x_1,\ldots,x_n\in \mathcal{X}$ and $\gamma_1,\ldots,\gamma_n\ge0$ are such that $\sum_{i=1}^n \gamma_i = 1$, then %
\begin{equation}
    \sum_{i=1}^n \gamma_i\tilde{v}_{0}^{\ssup{\alpha,\beta}}(x_i)\sim\Cauchy(\alpha/\beta).\label{eq:Cauchystatement}
\end{equation}
Hence, in particular, $\mathbb{E}|\tilde v_0^{\ssup{\alpha,\beta}}(x)|=\infty$ for each $x\in\mathcal{X}$.
\end{enumerate}
\end{thm}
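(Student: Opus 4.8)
The plan is to exploit the fact that, conditionally on the multiplicative noise, the additive part of the solution is Gaussian, and to identify its random conditional variance with the first hitting time of a Brownian motion. Such a hitting time is a one-sided stable-$1/2$ (Lévy) random variable, and mixing a centered Gaussian against it produces exactly a Cauchy law; this is the mechanism responsible for \cref{eq:Cauchystatement}.

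First I would write down the Duhamel (stochastic convolution) representation of $v^{\ssup T}$ in terms of the propagator $Z_{s,t}$ of the multiplicative equation \cref{eq:duT}. With zero initial data this gives $v_0^{\ssup T}(x)=\alpha\int_{-T}^0\int_{\mathcal X}Z_{s,0}(x,y)\,\dif V_s(y)\,\dif y$. Writing $h_s(y)=\sum_{i}\gamma_i Z_{s,0}(x_i,y)$ and $A_T=\sum_i\gamma_i v_0^{\ssup T}(x_i)$, the quantity of interest is $A_T=\alpha\int_{-T}^0\int_{\mathcal X}h_s(y)\,\dif V_s(y)\,\dif y$. Since $V$ is independent of $\mathscr F$ and $h$ is $\mathscr F$-measurable, conditionally on $\mathscr F$ the process $(A_T)_{T\ge0}$ is a continuous centered Gaussian martingale with bracket $\sigma_T^2:=\alpha^2\int_{-T}^0\iint h_s(y)h_s(y')R(y-y')\,\dif y\,\dif y'\,\dif s$; by Dambis--Dubins--Schwarz, $A_T=W_{\sigma_T^2}$ for a Brownian motion $W$ that can be taken independent of $\mathscr F$.

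Next I would bring in the martingale $\mathcal N_T:=\sum_i\gamma_i u_0^{\ssup T}(x_i)=\int_{\mathcal X}h_{-T}(y)\,\dif y$. Because each $u_0^{\ssup T}(x_i)\ge0$ and $\gamma_i\ge0$, this is a nonnegative continuous martingale, and $\sum_i\gamma_i=1$ gives $\mathcal N_0=1$. The key computation—the technical heart of the argument—is that in both $\sigma_T^2$ and $\langle\mathcal N\rangle_T$ the new noise entering as $T$ increases is injected at the earliest time $-T$ and transported to time $0$ by the \emph{same} propagator $Z_{-T,0}(x_i,\cdot)$; since $U$ and $V$ share the covariance $R$, this yields (using \cref{prop:martingaleQVs}) $\langle\mathcal N\rangle_T=\beta^2\int_{-T}^0\iint h_s(y)h_s(y')R(y-y')\,\dif y\,\dif y'\,\dif s$, and hence the \emph{pathwise} identity $\sigma_T^2=(\alpha/\beta)^2\langle\mathcal N\rangle_T$. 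Making this bracket identity rigorous, together with the Duhamel representation and the Wiener integrals in the weighted spaces of \cref{thm:convergence}, is where the main work (and the main obstacle) lies.

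Finally, I would invoke strong disorder. When $\beta\in I_{\mathrm{strong}}$, \cref{prop:phase-transition} gives $u_0^{\ssup T}(x_i)\to0$, so $\mathcal N_\infty=0$. Writing $\mathcal N_T=\mathcal B_{\langle\mathcal N\rangle_T}$ for a Brownian motion $\mathcal B$ started at $1$, the nonnegativity of $\mathcal N$ together with $\mathcal N_\infty=0$ forces $\langle\mathcal N\rangle_\infty=\tau:=\inf\{t:\mathcal B_t=0\}$, the first hitting time of $0$, whose law is stable-$1/2$ with $\mathbb E[\e^{-\lambda\tau}]=\e^{-\sqrt{2\lambda}}$. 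Thus $\sigma_\infty^2=(\alpha/\beta)^2\tau$, and since $W$ (built from $V$) is independent of $\tau$ (which is $\mathscr F$-measurable, built from $U$), the limit $A_\infty=W_{\sigma_\infty^2}$ has conditional characteristic function $\e^{-t^2\sigma_\infty^2/2}$, so $\mathbb E[\e^{\ii t A_\infty}]=\mathbb E[\e^{-t^2(\alpha/\beta)^2\tau/2}]=\e^{-(\alpha/\beta)|t|}$, the characteristic function of $\Cauchy(\alpha/\beta)$. Identifying $A_\infty$ with $\sum_i\gamma_i\tilde v_0^{\ssup{\alpha,\beta}}(x_i)$ via the convergence in \cref{thm:convergence} and passing the (uniformly bounded) conditional characteristic functions to the limit completes the proof, and the infinitude of $\mathbb E|\tilde v_0^{\ssup{\alpha,\beta}}(x)|$ follows since the Cauchy law has no first moment.
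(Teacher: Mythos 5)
Your proposal addresses only part~(\ref{enu:strongcase}) of the theorem; part~(\ref{enu:Lpcase}) is never touched, and it does not follow from the Cauchy analysis. The $L^p$ statement \cref{eq:Lpstatement} needs its own argument: the paper's route is to condition on $\mathscr{F}$ (the same conditional-Gaussianity you exploit) to reduce $\mathbb{E}\left[|v_0^{\ssup T}(x)-\tilde v_0^{\ssup{\alpha,\beta}}(x)|^p\right]$ to a constant times $\mathbb{E}\left[\left([M(\delta_x)]_\infty-[M(\delta_x)]_T\right)^{p/2}\right]$, and then to use the hypothesis $\beta\in I_{L^p}$ (i.e.\ $L^p$-boundedness of the martingale $(M_T(\delta_x))_T$) together with the Burkholder--Davis--Gundy and Doob inequalities and dominated convergence to show that this quantity vanishes as $T\to\infty$ (this is \cref{eq:LpQVconvergence}, used in the proof of \cref{eq:convinLp}). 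Without an argument of this kind your proof of the full statement is incomplete.

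Your argument for part~(\ref{enu:strongcase}) is correct, and it rests on the same skeleton as the paper's: the martingales $\mathcal{N}_T=M_T(\mu)$ and $A_T=N_T(\mu)$ for $\mu=\sum_i\gamma_i\delta_{x_i}$, the bracket identity $\sigma_T^2=(\alpha/\beta)^2\langle\mathcal{N}\rangle_T$ (which is exactly \cref{eq:QVrelation} of \cref{prop:martingaleQVs}), and strong disorder forcing $M_\infty(\mu)=0$. Where you genuinely diverge is the endgame. The paper treats the pair $(M_T(\mu),(\beta/\alpha)N_T(\mu))$ jointly: since the brackets agree and the cross-variation vanishes, the two-dimensional Dubins--Schwarz theorem (\cref{cor:dubinsschwarz}) represents the pair as a time-changed planar Brownian motion started at $(\mu(\mathcal{X}),0)$, and in strong disorder the limit of $N_T(\mu)$ is the vertical coordinate of the exit of that Brownian motion from the right half-plane, which is Cauchy by the Poisson kernel (\cref{prop:Cauchylimits}). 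You instead condition on $\mathscr{F}$: given the $U$-noise, $A_T$ is a Gaussian martingale with deterministic bracket, so $A_\infty$ is conditionally $N\left(0,(\alpha/\beta)^2\langle\mathcal{N}\rangle_\infty\right)$; one-dimensional Dubins--Schwarz applied to the nonnegative martingale $\mathcal{N}$ plus $\mathcal{N}_\infty=0$ identifies $\langle\mathcal{N}\rangle_\infty$ with a stable-$1/2$ hitting time, and the Gaussian-over-stable-$1/2$ mixture yields the characteristic function $\e^{-(\alpha/\beta)|t|}$. This is the classical subordination proof of the half-plane exit law, transplanted into the problem; it uses the independence of $U$ and $V$ and the linearity of $A_T$ in $V$ more strongly than the paper's route (which only needs the vanishing cross-variation \cref{eq:nojointQV}), but in exchange avoids the two-dimensional time-change theorem. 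Two small points to tighten: the independence of your Brownian motion $W$ from $\mathscr{F}$ deserves a line (its conditional law given $\mathscr{F}$ is Wiener measure, hence the independence), and the identification of $\lim_T A_T$ with $\sum_i\gamma_i\tilde v_0^{\ssup{\alpha,\beta}}(x_i)$ should go through the almost-sure martingale convergence \cref{eq:limitingdistn-1} and the definition \cref{eq:vtildedef}, not through \cref{thm:convergence}, which only gives convergence in probability of weighted spatial norms.
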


Here, we use $\Cauchy(c)$ to refer to the distribution of $c$ times a standard Cauchy random variable. So, for $c>0$, the pdf of the $\Cauchy(c)$ distribution is
\[
x\mapsto \frac 1{\pi c(1+(x/c)^2)},
\]
and the degenerate $\Cauchy(0)$ distribution is simply a $\delta$ distribution at the origin.

\begin{rem}\label{rem:samesign}
    The hypothesis that $\gamma_i\ge 0$ for each $i$ 
is essential for our proof of \cref{eq:Cauchystatement} to hold,
and we do not expect \cref{eq:Cauchystatement} to hold (even with a different parameter for the Cauchy distribution) if
the $\gamma_i$s are not all of the same sign. In fact, we do not have a conjecture for the distribution of the left side of \cref{eq:Cauchystatement} if the $\gamma_i$s have different signs.

    Families of random variables such that
all positive linear combinations are Cauchy distributed, but linear
combinations with mixed signs are in general not, have been recently
observed in several works in the statistics literature \cite{DX16,PM16,XCDS22}.
The complete scope of this phenomenon seems to not yet be well-understood. Moreover, these works also do not have any results or conjectures for the distribution of linear combinations with mixed signs. It is an intriguing question whether the statistics of the mixed-sign linear combinations enjoy any universality properties. If so, the present work gives another example that is likely to be in the same universality class and can be studied using martingale techniques.
\end{rem}

\begin{rem}\label{rem-nonlinear}
The equation \cref{eq:theSPDE} is similar to the problem
\begin{align}
    \dif\overline{v}_{t}(x) & =\frac{1}{2}\Delta\overline{v}_{t}(x)\dif t+(\beta^{2}\overline{v}_{t}(x)^{2}+\alpha^{2})^{1/2}\dif U_{t}(x);\qquad t\in\mathbb{R},x\in\mathcal{X}.\label{eq:theSPDE-2}
\end{align}
In fact, in the case when $R=\delta$ (space-time white noise), the
noises can be coupled such that \cref{eq:theSPDE} and \cref{eq:theSPDE-2}
have the same solutions. The equation \cref{eq:theSPDE-2} is
an example of a \emph{nonlinear stochastic heat equation}. When $d\ge3$, 
the problem \cref{eq:theSPDE-2} with $\alpha,\beta$ very small was shown by Gu and Li in \cite{GL19} to admit nontrivial spacetime-stationary solutions, as well as Edwards--Wilkinson
fluctuations after a diffusive rescaling. (See \cite{GRZ18,DGRZ20,CCM19} for Edwards--Wilkinson fluctuations in the $\alpha=0$ case.) It seems likely that Edwards--Wilkinson fluctuations hold in the subcritical regime for the present model as well. On the other hand, it is also likely that the behavior of \cref{eq:theSPDE-2}, and even that of equations with more generality nonlinearities having similar behavior at $0$ and $\pm\infty$, is qualitatively similar to those in the present study, although the appearance of exact Cauchy distributions seems unlikely. We leave these questions for future work.
\end{rem}

\subsubsection*{The logarithmically attenuated two-dimensional problem}
We also consider a logarithmically attenuated problem in $d=2$ (i.e. $\mathcal{X} = \mathbb{R}^2$, although the following discussion applies just as well to $\mathcal{X} = \mathbb{T}^2$). Consider the problem
\begin{equation}\label{eq:logproblem}
    \dif v_t^\eps = \frac12\Delta v_t^\eps (x)\dif t + \frac{\beta}{\sqrt{\log\eps^{-1}}}v_t^\eps(x)\dif U_t^\eps(x) + \frac\alpha{\sqrt{\log\eps^{-1}}}\dif V_t^\eps(x).
\end{equation}
Here, $U_t^\eps$ and $V_t^\eps$ are iid Gaussian noises that are white in time and have correlation kernel \[\mathbb{E} U_t^\eps(x)U_{t'}^\eps(x') = \mathbb{E} V_t^\eps(x)V_{t'}^\eps(x')=\delta(t-t')R^\eps(x-x'),\] where $R^\eps(y) = \eps^{-2}R(\eps^{-1}y)$ for some smooth, positive, compactly-supported correlation function $R$.

This problem with $\alpha=0$ was previously studied in \cite{CSZ17}.
In \cite[Theorem~2.15]{CSZ17}, it was shown that if $\alpha=0$, then for any fixed $t>0$ and $x\in\mathbb{R}^2$, we have 
\begin{equation}\label{eq:2dphasetrans}
    v^\eps_t(x) \xrightarrow[\eps\searrow 0]{\mathrm{law}}\begin{cases} \exp\{Z_\beta-\frac12\Var Z_\beta\}&\text{if }\beta<\sqrt{2\pi};\\
    0&\text{if }\beta\ge\sqrt{2\pi},\end{cases}
    \end{equation} where $Z_\beta\sim N(0,\log\frac1{1-\beta^2/(2\pi)})$. That is, there is a phase transition at $\beta=\sqrt{2\pi}$. We will prove the following generalization to $\alpha\ge0$ of the result \cref{eq:2dphasetrans} in the supercritical case.
\begin{thm}\label{thm:2dthm}
    If $\beta\ge\sqrt{2\pi}$ and $\alpha \ge0$, then for any  $x_1,\ldots,x_n\in\mathbb{R}^2$ and
    $\gamma_1,\ldots,\gamma_n\ge 0$ with $\sum_{i=1}^n \gamma_i=1$, we have \begin{equation}\lim\limits_{\eps\searrow 0}\Law\left(\sum_{i=1}^n\gamma_iv_t^\eps(x_i)\right)=\Cauchy(\alpha/\beta).\label{eq:2dconcl}\end{equation}
\end{thm}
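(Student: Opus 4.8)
The plan is to condition on the multiplicative noise and use that, given it, the tested solution is a centered Gaussian whose variance is, up to the explicit factor $\alpha^2/\beta^2$, the terminal quadratic variation of a nonnegative martingale started at $1$. First I would write the (zero-initial-data) solution of \cref{eq:logproblem} by variation of constants: if $\mathcal{Z}^\eps_{s,t}(x,y)$ denotes the propagator of the attenuated multiplicative equation (the MSHE partition function), then
\[
v^\eps_t(x)=\frac{\alpha}{\sqrt{\log\eps^{-1}}}\int_0^t\int\mathcal{Z}^\eps_{s,t}(x,y)\,\dif V^\eps_s(y)\,\dif y,
\]
so that, setting $\Phi^\eps_s(y)\coloneqq\sum_{i}\gamma_i\mathcal{Z}^\eps_{s,t}(x_i,y)$, we have $\sum_i\gamma_i v^\eps_t(x_i)=\frac{\alpha}{\sqrt{\log\eps^{-1}}}\int_0^t\int\Phi^\eps_s(y)\,\dif V^\eps_s(y)\,\dif y$. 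Because $(\dif V^\eps)$ is independent of $(\dif U^\eps)$ while $\mathcal{Z}^\eps$ is a function of $(\dif U^\eps)$, conditionally on $(\dif U^\eps)$ this is a centered Gaussian of variance $\sigma^2_\eps=\frac{\alpha^2}{\log\eps^{-1}}\int_0^t\int\int\Phi^\eps_s(y)\Phi^\eps_s(y')R^\eps(y-y')\,\dif y\,\dif y'\,\dif s$.

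Next I would set, for $\tau\in[0,t]$, $N_\tau\coloneqq\int\Phi^\eps_{t-\tau}(y)\,\dif y=\sum_i\gamma_i\int\mathcal{Z}^\eps_{t-\tau,t}(x_i,y)\,\dif y$. Each summand is a look-back total mass, which by \cref{prop:martingaleQVs} is a nonnegative martingale in $\tau$; since $\gamma_i\ge0$, $N$ is itself a nonnegative continuous martingale, with $N_0=\sum_i\gamma_i=1$ and quadratic variation $\langle N\rangle_t=\frac{\beta^2}{\log\eps^{-1}}\int_0^t\int\int\Phi^\eps_s(y)\Phi^\eps_s(y')R^\eps(y-y')\,\dif y\,\dif y'\,\dif s$. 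Comparing with the variance above yields the key identity $\sigma^2_\eps=(\alpha^2/\beta^2)\langle N\rangle_t$, in which the attenuation $1/\log\eps^{-1}$ has cancelled. The nonnegativity of $N$---which is exactly where the hypothesis $\gamma_i\ge0$ enters, cf.\ \cref{rem:samesign}---is essential to what follows.

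The crux is an exponential-martingale identity. For $\lambda>0$, Itô's formula applied with $f(y)=\e^{-\sqrt{2\lambda}\,y}$ (which satisfies $\tfrac12f''=\lambda f$, killing the $\dif\langle N\rangle$ drift) shows that $\e^{-\sqrt{2\lambda}\,N_\tau-\lambda\langle N\rangle_\tau}$ is a local martingale; since $N\ge0$ it takes values in $(0,1]$ and is therefore a true martingale, whence
\[
\mathbb{E}\bigl[\e^{-\sqrt{2\lambda}\,N_t}\e^{-\lambda\langle N\rangle_t}\bigr]=\e^{-\sqrt{2\lambda}}.
\]
Now I would invoke supercriticality: for $\beta\ge\sqrt{2\pi}$, \cref{eq:2dphasetrans} (from \cite[Theorem~2.15]{CSZ17}) gives $\int\mathcal{Z}^\eps_{0,t}(x_i,y)\,\dif y\to0$ in probability for each $i$, hence $N_t=\sum_i\gamma_i\int\mathcal{Z}^\eps_{0,t}(x_i,y)\,\dif y\to0$ in probability. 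Bounded convergence then replaces $\e^{-\sqrt{2\lambda}N_t}$ by $1$ in the limit, giving $\mathbb{E}[\e^{-\lambda\langle N\rangle_t}]\to\e^{-\sqrt{2\lambda}}$; that is, $\langle N\rangle_t$ converges in law to the first-passage (Lévy) law of the hitting time of $0$ by a Brownian motion started at $1$, whose Laplace transform is $\e^{-\sqrt{2\lambda}}$.

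To finish, conditional Gaussianity gives $\mathbb{E}\e^{\ii\theta\sum_i\gamma_iv^\eps_t(x_i)}=\mathbb{E}\e^{-\theta^2\sigma^2_\eps/2}=\mathbb{E}\e^{-\lambda\langle N\rangle_t}$ with $\lambda=\theta^2\alpha^2/(2\beta^2)$, which tends to $\e^{-\sqrt{2\lambda}}=\e^{-(\alpha/\beta)|\theta|}$, the characteristic function of $\Cauchy(\alpha/\beta)$; this is \cref{eq:2dconcl}. I expect the main obstacle to be not this probabilistic skeleton---robust and essentially identical to the one behind \cref{thm:stsproperties}(\ref{enu:strongcase})---but the careful justification, at fixed $\eps$, of the two structural inputs: the variation-of-constants representation together with the conditional Gaussianity and variance formula, and the martingale property and quadratic-variation identity for $N$, including the time-reversal/duality that identifies $\int\mathcal{Z}^\eps_{t-\tau,t}(x,\cdot)$ with the partition function controlled by \cite{CSZ17}. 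For each fixed $\eps>0$ the noise is spatially smooth and the solution theory is classical, so these should follow from \cref{prop:martingaleQVs} and the computations already developed for \cref{thm:stsproperties}. The one genuinely new feature is that here the martingale runs on the finite horizon $[0,t]$ and is sent to $0$ through the limit $\eps\to0$ rather than a long-time limit, which is precisely why the fixed-horizon exponential-martingale identity is used in place of an almost-sure argument.
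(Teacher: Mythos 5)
Your proof is correct, and it takes a genuinely different route from the paper's. The paper proves \cref{thm:2dthm} in a few lines by reusing its martingale-pair machinery: it applies \cref{cor:dubinsschwarz} to the pair $(M^\eps_t(\mu),N^\eps_t(\mu))$, couples the resulting Dubins--Schwarz Brownian motions so that they are the same for all $\eps$, feeds in $M^\eps_t(\mu)\to 0$ in probability from \cite[Theorem~2.15]{CSZ17}, and then uses path continuity of Brownian motion to conclude that $[M^\eps(\mu)]_t\to\iota(\mu)$ and $(\beta/\alpha)N^\eps_t(\mu)\to X_{\iota(\mu)}(\mu)$ in probability, the latter being $\Cauchy(\mu(\mathcal{X}))$-distributed by the half-plane exit law \cref{eq:itscauchy}. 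You never invoke Dubins--Schwarz: conditioning on the multiplicative noise gives a centered Gaussian with variance $(\alpha^2/\beta^2)$ times the quadratic variation of the look-back mass martingale (exactly the content of \cref{eq:QVrelation}); the exponential local martingale $\e^{-\sqrt{2\lambda}\,N_\tau-\lambda\langle N\rangle_\tau}$ --- a true martingale precisely because nonnegativity of $N$ (this is where $\gamma_i\ge0$ enters) makes it bounded --- yields the fixed-horizon identity $\mathbb{E}\bigl[\e^{-\sqrt{2\lambda}\,N_t-\lambda\langle N\rangle_t}\bigr]=\e^{-\sqrt{2\lambda}}$; the same input \cite[Theorem~2.15]{CSZ17} plus bounded convergence removes the $N_t$ factor; and the limit is identified through transforms, since a centered Gaussian whose variance follows the stable-$1/2$ hitting-time law has characteristic function $\e^{-(\alpha/\beta)|\theta|}$. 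Both proofs rest on the same structural inputs (the Duhamel formula, \cref{prop:martingaleQVs}, and \cite{CSZ17}) and the same underlying geometry (the clock run until a Brownian motion started at $1$ hits $0$), but your transform-level argument works purely with laws, so it sidesteps the coupling-across-$\eps$ device the paper needs in order to phrase the argument as limits in probability; the price is that you obtain only convergence in law, which is, however, all that \cref{eq:2dconcl} asserts. Two cosmetic points: your $N$ is the paper's $M$ (the MSHE martingale), which collides with the paper's use of $N$ for the AMSHE martingale, and the differential in your Duhamel formula should be $\dif V^\eps_s(y)$ alone rather than $\dif V^\eps_s(y)\,\dif y$.
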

\cref{thm:2dthm} is consistent with a computation of \cite{DG23a}, in which, for a related model, a statement analogous to %
\begin{equation}\label{eq:2dsubcritcase}
\adjustlimits\lim_{\beta\nearrow\sqrt{2\pi}}\lim_{\eps\searrow 0} \Law\left(\sum_{i=1}^n \gamma_i v_t^\eps(x_i)\right) = \Cauchy(\alpha/\sqrt{2\pi}).
\end{equation}
is shown.
(In fact, \cref{eq:2dsubcritcase} does not require the hypothesis that $\gamma_i\ge 0$.) We can thus interpret \cref{thm:2dthm} as in particular stating that the limits in \cref{eq:2dsubcritcase} can be exchanged.

The problem \cref{eq:logproblem} with $\alpha=0$ and $\beta=\sqrt{2\pi}$ is known to converge as a \emph{process} in $(t,x)$ to the so-called \emph{critical stochastic heat flow} \cite{CSZ21}. We leave the process-level behavior of \cref{eq:logproblem} with $\alpha> 0$ as a problem for future study.

\subsection{Source of the Cauchy distribution} Here we give a sketch of the origin of the Cauchy distribution in \cref{thm:stsproperties}(\ref{enu:strongcase}). The argument proceeds by a martingale analysis. We will study two martingales, corresponding to the MSHE and the AMSHE as the starting time $T$ is moved backward in time. By the Duhamel formula for the AMSHE and Itô's formula, we will show that these martingales satisfy the hypotheses of the following key observation:
\begin{prop}\label{prop:howtogetCauchy}
    Suppose that $(M_t)_{t\ge 0}$ and $(N_t)_{t\ge 0}$ are continuous martingales (with respect to the same filtration) satisfying the following hypotheses almost surely:
    \begin{enumerate}
        \item $[M]_t = [N]_t$ for all $t\ge 0$.\label{enu:sameqv}
        \item $[M,N]_t = 0$ for all $t\ge 0$.\label{enu:nojointqv}
        \item $M_t \ge 0$ for all $t\ge 0$.\label{enu:positive}
        \item $\lim\limits_{t\to\infty} M_t  = 0$.\label{enu:tozero}
        \item $N_0 =0$.\label{enu:N0iszero}
    \end{enumerate}
    Then $N_t$ converges almost surely to a $\Cauchy(M_0)$-distributed limit as $t\to\infty$.
\end{prop}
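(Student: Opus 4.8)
The plan is to exploit the fact that two continuous martingales with identical quadratic variation and vanishing cross-variation can be realized as orthogonal time-changes of a single planar Brownian motion, and then to read off the Cauchy distribution from the geometry of Brownian motion hitting a horizontal line. First I would use the Dambis--Dubins--Schwarz theorem. Hypotheses \eqref{enu:sameqv} and \eqref{enu:nojointqv} say that the $\mathbb{R}^2$-valued martingale $t\mapsto(M_t,N_t)$ has quadratic covariation matrix equal to $[M]_t$ times the identity. Hence, after the common time change $\tau=[M]_t$, the pair $(M,N)$ becomes a standard two-dimensional Brownian motion $(B^{(1)},B^{(2)})$ run up to time $[M]_\infty$, started at $(M_0,N_0)=(M_0,0)$ by hypothesis \eqref{enu:N0iszero}. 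The point is that the two coordinates, having equal and ``diagonal'' bracket, are \emph{independent} Brownian motions after this single scalar time change, which is exactly the rigidity that forces a Cauchy law.

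Next I would identify the limiting time. By hypotheses \eqref{enu:positive} and \eqref{enu:tozero}, $M_t$ is a nonnegative martingale converging to $0$, so its bracket $[M]_t$ converges to some (possibly infinite) limit $[M]_\infty$, and the first coordinate $B^{(1)}$, started at $M_0>0$, must approach $0$ at the terminal clock value. The natural move is to consider the planar Brownian motion started at $(M_0,0)$ and to track it until its first coordinate first equals $0$; by the recurrence/continuity structure this is essentially the hitting time $\sigma$ of the vertical line $\{x_1=0\}$. At that moment the first coordinate is $0$ and the second coordinate $B^{(2)}_\sigma$ is distributed as the horizontal displacement of a planar Brownian motion started at horizontal distance $M_0$ from the line when it first hits that line.

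The classical computation is that for planar Brownian motion started at distance $a>0$ from a line, the coordinate along the line at the first hitting time is exactly $\Cauchy(a)$-distributed; equivalently, the exit position of $B^{(2)}$ when $B^{(1)}$ (a one-dimensional Brownian motion from $M_0$) first hits $0$ is $M_0$ times a standard Cauchy, by the standard time-change/stable-subordinator argument (the inverse local time or hitting time of $B^{(1)}$ is a one-sided $\tfrac12$-stable subordinator, and running an independent Brownian motion for that random time yields a symmetric Cauchy). Thus $N_\infty=B^{(2)}_\sigma\sim\Cauchy(M_0)$, conditionally on $M_0$, which is $\mathscr{F}_0$-measurable; this yields the claimed law.

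The main obstacle is \emph{not} the distributional identity but controlling the terminal time and the almost-sure convergence of $N_t$. The delicate points are: that $[M]_\infty$ may be infinite, so one must argue that the first coordinate genuinely reaches $0$ rather than merely tending to it (here hypotheses \eqref{enu:positive} and \eqref{enu:tozero} together with the Brownian representation should force $[M]_\infty$ to coincide with the hitting time $\sigma$ of $0$, since a nonnegative Brownian motion cannot avoid hitting $0$ before drifting off); and that $N_t$ converges almost surely, not just in law, which I would obtain by showing $[N]_\infty=[M]_\infty=\sigma<\infty$ along the relevant paths and invoking the a.s.\ convergence of an $L^2$-bounded-bracket martingale via its Brownian representation. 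Making the identification $[M]_\infty=\sigma$ rigorous, and handling the degenerate case $M_0=0$ (where everything collapses to the $\delta$ at the origin), is where the real care is needed.
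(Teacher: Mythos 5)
Your proposal is correct and takes essentially the same route as the paper's proof: the two-dimensional Dubins--Schwarz theorem applied via hypotheses (1) and (2), then hypotheses (3) and (4) to identify the terminal clock value $[M]_\infty$ with the first hitting time of $\{x_1=0\}$ by the planar Brownian motion started at $(M_0,0)$, and finally the classical exit law from the half-plane giving $\Cauchy(M_0)$. The only cosmetic difference is that you obtain the exit law via the $\tfrac12$-stable subordinator computation, whereas the paper quotes the Poisson kernel of the right half-plane; these are two derivations of the same classical fact.
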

The statement and proof of \cref{prop:howtogetCauchy} will be split across several lemmas in the sequel, since we will need some of the intermediate results for other parts of our argument. For clarity, we give a compact proof here.
\begin{proof}[Proof of \cref{prop:howtogetCauchy}]
    By the Dubins--Schwarz Theorem and hypotheses~(\ref{enu:sameqv}) and~(\ref{enu:nojointqv}), there is a random variable $\sigma$ and nondecreasing time change $g\colon [0,\sigma)\to [0,\infty)$ with $g(0)=0$ and $\lim\limits_{s\nearrow\sigma} g(s)=\infty$ such that $(M_{g(s)},N_{g(s)})_{s\in [0,\sigma)}$ is a standard two-dimensional Brownian motion. If we extend this Brownian motion to a Brownian motion $(X_s,Y_s)$ on $[0,\infty)$, Hypotheses~(\ref{enu:positive}) and~(\ref{enu:tozero}) then imply that $\sigma$ is the first hitting time of the Brownian motion $(X_s)_s$ to zero. This means that $\lim\limits_{t\to\infty} N_t = \lim\limits_{s\to\sigma} Y_s = Y_\sigma$ is the second coordinate of the exit location of a standard two-dimensional Brownian motion, started at $(M_0,0)$, from the right half plane. The distribution of this object is given by the Poisson kernel for the right half-plane, which is well known to be the pdf of a $\Cauchy(M_0)$ random variable. (See \cite[§2.2.4b]{Eva10} or \cite[Lemma~3]{Spi58}).
    \end{proof}

    \begin{rem}\label{rem:assumption}
The simplicity of the proof of \cref{prop:howtogetCauchy} means that we expect our results to hold in rather greater generality. For example, \cref{assu:R}  imposed on the covariance function $R$ is not optimal, and we expect the main results of the paper to be true for many non-smooth and non-compactly-supported correlation kernels as well. We also expect similar results to hold for more general spaces $\mathcal{X}$, e.g.\ in the setting of continuous-time directed polymers on discrete lattices or even more general graphs. We have chosen the rather restrictive assumptions to keep the amount of technical baggage in the paper to a minimum and clearly elucidate the key ideas.
\end{rem}

\begin{rem}\label{rem:need-cts} On the other hand, a key assumption in \cref{prop:howtogetCauchy} is that the martingales are continuous. This is essential for the application of the Dubins--Schwarz theorem, and the proposition is not true without this assumption. Therefore, we do not expect Cauchy statistics to arise for the invariant measures of similar discrete-time problems, although we do expect such invariant measures to have qualitatively similar features.
\end{rem}

\subsection*{Organization of the paper}
In \cref{sec:mult-duhamel}, we introduce the Duhamel formula for solutions to \cref{eq:theSPDE}, which forms the backbone of our analysis. In \cref{sec:onept}, we show that the one-point distributions of solutions converge. In \cref{sec:cauchy}, we show that in the supercritical regime the limiting statistics are Cauchy, and also prove \cref{thm:2dthm}. In \cref{sec:stationary}, we show that spacetime-stationary solutions exist at the process level, and prove \cref{thm:convergence,thm:stsproperties}.

\subsection*{Acknowledgments}
We thank Nikos Zygouras for interesting discussions. We also thank two anonymous referees for careful readings of the manuscript and many important comments.

\section{The multiplicative problem and the Duhamel formula}\label{sec:mult-duhamel}

It will often suit our purposes to represent solutions to \cref{eq:theSPDE}
in terms of a Duhamel formula. To this end, we let ${Z}_{s,t}^{\ssup\beta}(y,x)$
denote the Green's function for the multiplicative problem \cref{eq:duT},
solving
\begin{align}
\dif_{t}{Z}_{s,t}^{\ssup\beta}(y,x) & =\frac{1}{2}\Delta_{x}{Z}_{s,t}^{\ssup\beta}(y,x)\dif t+\beta{Z}_{s,t}^{\ssup\beta}(y,x)\dif U_{t}(x), &  & -\infty<s<t<\infty\text{ and }x,y\in\mathcal{X};\label{eq:Zeqn}\\
{Z}_{s,s}^{\ssup\beta}(y,x) & =\delta(x-y), &  & s\in\mathbb{R}\text{ and }x,y\in\mathcal{X}.\label{eq:Zicdelta}
\end{align}
If $R$ is spatially smooth, then we have the Feynman--Kac formula
\begin{equation}
    {Z}_{s,t}^{\ssup\beta}(y,x)=G_{t-s}(x-y)\mathrm{E}_{X}^{(s,y)\rightsquigarrow(t,x)}\bigg[\exp\left\{ \int_{s}^{t}\dif U_{r}(X(r))-\frac{1}{2}\beta^{2}R(0)(t-s)\right\} \bigg],\label{eq:feynmankac}
\end{equation}
where \[G_t(x) = \begin{cases}
(2\pi t)^{-d/2} \e^{-|x|^2/(2t)},&\mathcal{X}=\mathbb{R}^d;\\
\sum_{y\in\mathbb{Z}^d}(2\pi t)^{-d/2} \e^{-|x+y|^2/(2t)}, &\mathcal{X}=\mathbb{T}^d\end{cases}\] is the standard heat kernel, and
$\mathrm{E}_{X}^{(s,y)\rightsquigarrow(t,x)}$ denotes expectation
with respect to a unit-diffusivity Brownian bridge $X$ such that $X(s)=y$ and $X(t)=x$.
If $R=\delta$ (and hence $d=1$), then the expression \cref{eq:feynmankac} becomes
singular, but the solution to \crefrange{eq:Zeqn}{eq:Zicdelta} can nonetheless be constructed; see \cite{AKQ14b,AJRAS22a}. (These references deal with the case $\mathcal{X}=\mathbb{R}$, but the construction in the case $\mathcal{X}=\mathbb{T}$ is simply the same but with periodized noise and  so poses no additional difficulty.)

The following basic property of the propagator will often be useful:
\begin{lem}\label{lem:momentbd}
    For each $s<t$ and $x\in\mathcal{X}$, the process $y\mapsto G_{t-s}^{-1}(x-y){Z}^{\ssup\beta}_{s,t}(y,x)$ is stationary in $y$. Moreover, we have for each fixed $s\in\mathbb{R}$, $T\in(0,\infty)$, and $p\in [1,\infty)$ that
    \begin{equation}
        \adjustlimits\sup_{t\in[s,s+T]}\sup_{y\in\mathcal{X}} \mathbb{E}[(G_{t-s}(x-y)^{-1}{Z}^{\ssup\beta}_{s,t}(y,x))^p]<\infty.\label{eq:moment-bd}
    \end{equation}
\end{lem}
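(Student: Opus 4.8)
The plan is to read off both assertions from the Feynman--Kac representation \cref{eq:feynmankac}, deferring the white-noise case $R=\delta$ to a mollification argument at the end. Write $\Phi(y,x)\coloneqq G_{t-s}(x-y)^{-1}{Z}^{\ssup\beta}_{s,t}(y,x)$, so that by \cref{eq:feynmankac} the quantity $\Phi(y,x)=\mathrm{E}_X^{(s,y)\rightsquigarrow(t,x)}\bigl[\exp\{\beta\int_s^t\dif U_r(X(r))-\frac12\beta^2R(0)(t-s)\}\bigr]$ is the normalized point-to-point polymer partition function. Both claims will be deduced from its structure as a Brownian-bridge average.

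For the stationarity in $y$, fix $x$ and $h\in\mathcal{X}$; the goal is to show $(\Phi(y_i+h,x))_{i=1}^n\stackrel{d}{=}(\Phi(y_i,x))_{i=1}^n$ for arbitrary $y_1,\dots,y_n$. The key observation is that a bridge from $(s,y_i+h)$ to $(t,x)$ can be coupled to a bridge $X^{(i)}$ from $(s,y_i)$ to $(t,x)$ so that it equals $X^{(i)}(r)+\theta(r)$, where $\theta(r)=h\frac{t-r}{t-s}$ is \emph{deterministic} and, crucially, the \emph{same} for every $i$ (two bridges with common terminal point differ only in the deterministic linear interpolation of their starting points). Hence $\Phi(y_i+h,x)$ is obtained from $\Phi(y_i,x)$ by replacing the driving noise $U_r(\cdot)$ with $\hat U_r(\cdot)\coloneqq U_r(\cdot+\theta(r))$, simultaneously for all $i$. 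Since $U$ is white in time and spatially homogeneous, a deterministic time-dependent spatial shift within each time slice preserves its law: the formal covariance $\delta(r-r')R((z+\theta(r))-(z'+\theta(r')))$ equals $\delta(r-r')R(z-z')$ because $\theta(r)=\theta(r')$ whenever $r=r'$. Thus $\hat U\stackrel{d}{=}U$, and the claimed equality of finite-dimensional distributions follows.

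For the moment bound, stationarity renders $\mathbb{E}[\Phi(y,x)^p]$ independent of $y$, so it suffices to bound it at one point and uniformly in $t\in[s,s+T]$. For integer $k\ge1$ the standard replica computation (performing the Gaussian expectation over $U$ against $k$ independent bridges $X^1,\dots,X^k$, with the diagonal self-interactions cancelling the renormalization $-\frac12\beta^2R(0)(t-s)$ in each replica) gives $\mathbb{E}[\Phi(y,x)^k]=\mathrm{E}^{\otimes k}\bigl[\exp\{\beta^2\sum_{i<j}\int_s^t R(X^i(r)-X^j(r))\,\dif r\}\bigr]$. Since $R(z)\le R(0)$, this is at most $\exp\{\beta^2\binom{k}{2}R(0)\,T\}$, uniformly in $y,x$ and in $t\in[s,s+T]$. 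For general $p\in[1,\infty)$ I would take $k=\lceil p\rceil$ and conclude via the power-mean inequality $\|\Phi\|_{L^p}\le\|\Phi\|_{L^k}$.

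The white-noise case $R=\delta$, $d=1$, is not covered by \cref{eq:feynmankac} directly; here I would approximate $\delta$ by smooth, nonnegative, compactly supported mollifiers $R_\eps\to\delta$. Stationarity passes to the limit since it is an equality in law and the mollified propagators converge to the white-noise propagator in the sense of \cite{AKQ14b,AJRAS22a}, while the requisite finite moment bounds are the classical ones for the one-dimensional continuum polymer (the $k$-th moment being controlled by exponential moments of the collision local time of independent bridges). I expect the stationarity assertion to be the main obstacle: the entire argument rests on recognizing that translating the bridge's starting point induces only a deterministic, time-slice-wise spatial shift of the noise, under which a temporally white field is invariant; once this is in hand, the moment bound is a routine consequence of stationarity together with the replica identity.
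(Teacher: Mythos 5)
Your proposal takes a genuinely different route from the paper, which gives no direct argument at all: the paper handles the case $R=\delta$, $\mathcal{X}=\mathbb{R}$ by citing \cite[Proposition~2.3, Corollary~2.4, and Lemma~3.2]{AJRAS22a} and \cite[Proposition~1.4]{ACQ11}, and then asserts that the smooth and periodic cases are ``identical.'' For $\mathcal{X}=\mathbb{R}^d$ with smooth $R$ your argument is correct and self-contained, and it is in fact the mechanism underlying the cited symmetry results: the bridge coupling (a bridge started at $y+h$ is the bridge started at $y$ plus the deterministic path $\theta(r)=h\frac{t-r}{t-s}$, with the \emph{same} $\theta$ for every starting point), combined with the invariance in law of a temporally white, spatially homogeneous noise under the shear $U_r(\cdot)\mapsto U_r(\cdot+\theta(r))$, gives stationarity; and the replica identity $\mathbb{E}[\Phi(y,x)^k]=\mathrm{E}^{\otimes k}\bigl[\exp\{\beta^2\sum_{i<j}\int_s^tR(X^i(r)-X^j(r))\,\dif r\}\bigr]$, bounded by $\exp\{\beta^2\binom{k}{2}R(0)T\}$ since $R\le R(0)$, together with $\|\Phi\|_{L^p}\le\|\Phi\|_{L^{\lceil p\rceil}}$, gives \cref{eq:moment-bd}. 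Your treatment of $R=\delta$ (mollification plus collision-local-time moment bounds) is no less complete than the paper's pure citation, and you correctly recognize that the crude bound $R_\eps\le R_\eps(0)$ is useless in that limit, so the local-time estimates are genuinely needed there.

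The gap is the torus. The lemma is stated for $\mathcal{X}=(\mathbb{R}/\mathbb{Z})^d$ as well, and there your key coupling step fails: a torus Brownian bridge from $(s,y+h)$ to $(t,x)$ is \emph{not} a torus bridge from $(s,y)$ to $(t,x)$ plus a deterministic path. A torus bridge is a mixture over winding sectors, i.e.\ over the lifts $x+k$, $k\in\mathbb{Z}^d$, of the terminal point, with weights $G_{t-s}(x+k-y)/\sum_jG_{t-s}(x+j-y)$; translating $y$ changes these weights, while your shear acts only within each fixed sector. (Equivalently, the Galilean tilt $\exp\{v\cdot x+\cdots\}$ that implements shear covariance of the SHE is not periodic in $x$, so shear covariance fails on the torus.) Your moment bound is unaffected: the replica bound holds verbatim for torus bridges and is uniform in the endpoints, so it never needed stationarity, and \cref{eq:moment-bd} follows in all cases. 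But the stationarity assertion is left unproven for the torus, and it does not look patchable: writing $Z^{\mathrm{tor}}_{s,t}(y,x)=\sum_k\hat{Z}_{s,t}(y,x+k)$ with $\hat{Z}$ the planar propagator driven by the periodized noise, the replica computation gives $\mathbb{E}\bigl[(G_{t-s}(x-y)^{-1}Z^{\mathrm{tor}}_{s,t}(y,x))^2\bigr]$ as an average of quantities $\psi(k-k')$ (exponential functionals of pairs of bridges with relative winding $k-k'$) with weights proportional to $G_{t-s}(\xi+k)G_{t-s}(\xi+k')$, $\xi=x-y$; for small $t-s$ this is approximately $\psi(0)$ at $\xi=0$ but approximately $\tfrac12(\psi(0)+\psi(1))$ at $\xi=\tfrac12$, and $\psi(0)\neq\psi(1)$ whenever $R$ is non-constant. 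So exact stationarity appears to be genuinely delicate on the torus --- arguably a defect of the paper's ``identical fashion'' remark as much as of your proof --- but since your write-up presents the bridge coupling as valid for all $\mathcal{X}$, you should either restrict the stationarity claim to $\mathcal{X}=\mathbb{R}^d$ or address the torus case by a separate argument.
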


In the case $R=\delta$ and $\mathcal{X}=\mathbb{R}$, the stationarity in \cref{lem:momentbd} is a consequence of the symmetry properties proved in \cite[Proposition~2.3]{AJRAS22a}; see \cite[Corollary~2.4]{AJRAS22a} or \cite[Proposition~1.4]{ACQ11}. In this case, the moment bound have been proved as \cite[Lemma~3.2]{AJRAS22a} (which only considers the case $y=0$, but spatial stationarity implies that the supremum can be taken over $y$). The cases of periodic and smooth noise can be handled in an identical fashion, so we omit the proofs here.

By the linearity
of the equation \cref{eq:Zeqn}, we have a Chapman--Kolmogorov-type equation: almost surely,
whenever $s<r<t$,
\begin{equation}
{Z}_{s,t}^{\ssup\beta}(y,x)=\int_{\mathcal{X}}{Z}_{s,r}^{\ssup\beta}(y,z){Z}_{r,t}^{\ssup\beta}(z,x)\,\dif z.\label{eq:CKthing}
\end{equation}
See \cite[Lemma~3.12]{AJRAS22a} for the case $R=\delta$ and $\mathcal{X}=\mathbb{R}$; the proofs of the other cases are identical.

For $s<t$, we have the following backward-in-time Duhamel formula for ${Z}^{\ssup \beta}_{s,t}$:
\begin{equation}
    {Z}^{\ssup \beta}_{s,t}(y,x) = G_{t-s}(x-y) - \beta\int_s^t\int_{\mathcal{X}} G_{r-s}(y-z){Z}^{\ssup \beta}_{r,t}(z,x)\,\dif U_{r}(z).\label{eq:Zmild}
\end{equation}
In the case $R=\delta$ and $\mathcal{X}=\mathbb{R}$, this follows from forward-in-time Duhamel formula \cite[(2.2)]{AJRAS22a} and the time-reversal symmetry \cite[Proposition~2.3(ii)]{AJRAS22a}. The case when $R$ is smooth is completely analogous; %
alternatively, in the case when $R$ is smooth, \cref{eq:Zmild} is straightforward to derive from the Feynman--Kac formula \cref{eq:feynmankac} and the Itô formula.
Also, for $s<t$, we can write a Duhamel-type formula for the solution
to \cref{eq:theSPDE}:
\begin{equation}
    v_{t}(x)=\int_{\mathcal{X}}{Z}_{s,t}^{\ssup\beta}(y,x)v_{s}(y)\,\dif y+\alpha\int_{s}^{t}\int_{\mathcal{X}}{Z}_{r,t}^{\ssup\beta}(y,x)\,\dif V_r(y).\label{eq:Duhamel}
\end{equation}
The fact that the stochastic integral is well-defined is a straightforward consequence of the moment bound stated in \cref{lem:momentbd}.

\begin{lem}
For any fixed $\xi>\tilde{\xi}>0$, $s<t$, and $p\in[1,\infty)$, we have almost
surely that
\begin{equation}
    \int_{\mathcal{X}}\left(w_{\mathcal{X};\xi}(x)\sup_{z\in\mathcal{X}}\frac{{Z}_{s,t}^{\ssup\beta}(z,x)}{w_{\mathcal{X};\tilde\xi}}(z)\right)^{p}\,\dif x<\infty\label{eq:Zpinftybd}
\end{equation}
and, if $1/p+1/q=1$,
\begin{equation}
    \int_{\mathcal{X}}\left(w_{\mathcal{X};\xi}(x)\left(\int_{\mathcal{X}} \left(\frac{{Z}_{s,t}^{\ssup\beta}(z,x)}{w_{\mathcal{X};\tilde\xi}}(z)\right)^q\,\dif z\right)^{1/q}\right)^p\,\dif x<\infty.\label{eq:Zpqbd}
\end{equation}
\end{lem}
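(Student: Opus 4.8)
The plan is to prove both bounds by showing that the \emph{expectation} of each integral is finite, so that by Tonelli's theorem the integrals themselves are finite almost surely. Throughout I write $\tau=t-s$ and factor the propagator as $Z^{\ssup\beta}_{s,t}(z,x)=G_{\tau}(x-z)\,\zeta_x(z)$, where $\zeta_x(z)\coloneqq G_{\tau}(x-z)^{-1}Z^{\ssup\beta}_{s,t}(z,x)\ge 0$. By \cref{lem:momentbd}, for fixed $x$ the field $z\mapsto\zeta_x(z)$ is stationary in $z$, and the spatial homogeneity of the noise (shifting $z$ and $x$ together preserves the law of $Z^{\ssup\beta}_{s,t}$, while $G_\tau$ is unchanged) forces the law of the whole process $z\mapsto\zeta_x(z)$ to be independent of $x$; combined with \cref{eq:moment-bd} this yields $\sup_x\sup_z\mathbb{E}[\zeta_x(z)^p]<\infty$ for every $p$. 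I will also repeatedly use the elementary Gaussian estimate that, because $\xi>\tilde\xi$, one has $\int_{\mathcal{X}}\e^{\tilde\xi|z|}G_{\tau}(x-z)^{q}\,\dif z\le C\e^{\tilde\xi q|x|}$ (by the bound $|z|\le|x|+|z-x|$ and completing the square), which when integrated against the weight $\e^{-p\xi|x|}$ leaves the convergent integral $\int\e^{-p(\xi-\tilde\xi)|x|}\,\dif x$; an analogous statement holds for the discrete sums appearing below.

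For the second bound \cref{eq:Zpqbd} no supremum is involved, so the argument is purely moment-theoretic. After moving the expectation inside via Tonelli, it suffices to control the $L^{p/q}(\Omega)$-norm of $\int_{\mathcal{X}}(Z^{\ssup\beta}_{s,t}(z,x)/w_{\mathcal{X};\tilde\xi}(z))^{q}\,\dif z$. When $p\ge q$ the exponent $p/q\ge 1$, so Minkowski's integral inequality moves the $L^{p/q}(\Omega)$-norm inside the $z$-integral, producing $\int_{\mathcal{X}}\e^{\tilde\xi q|z|}(\mathbb{E}[Z^{\ssup\beta}_{s,t}(z,x)^{p}])^{q/p}\,\dif z$; when $p\le q$ the exponent $p/q\le 1$ and Jensen's inequality yields the same with $\mathbb{E}[\,\cdot\,]$ in place of the norm. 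In either case the factorization together with $\sup_z\mathbb{E}[\zeta_x(z)^{\max(p,q)}]<\infty$ reduces the inner quantity to a constant multiple of $\int_{\mathcal{X}}\e^{\tilde\xi q|z|}G_{\tau}(x-z)^{q}\,\dif z$ raised to the power $p/q$, which is at most $C\e^{p\tilde\xi|x|}$ by the Gaussian estimate. Integrating against $\e^{-p\xi|x|}\,\dif x$ finishes this bound.

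The first bound \cref{eq:Zpinftybd} is harder because of the supremum over $z$. The crux is a \emph{maximal moment estimate}: for every $p$ one has $A_{p}\coloneqq\mathbb{E}\bigl[(\sup_{z\in Q_{0}}\zeta_x(z))^{p}\bigr]<\infty$, where $Q_{0}$ is the unit cube centered at the origin; by the stationarity and $x$-independence noted above, $A_p$ does not depend on $x$ or on the location of the cube. Granting this, I decompose $\mathbb{R}^{d}$ into unit cubes $Q_{k}$, $k\in\mathbb{Z}^{d}$, estimate $\sup_{z\in Q_{k}}(Z^{\ssup\beta}_{s,t}(z,x)\e^{\tilde\xi|z|})\le \e^{\tilde\xi(|k|+\sqrt d)}(\sup_{z\in Q_{k}}G_{\tau}(x-z))(\sup_{z\in Q_{k}}\zeta_x(z))$, and use $(\sup_{z}(\cdots))^{p}\le\sum_{k}(\cdots)^{p}$ together with the Gaussian decay $\sup_{z\in Q_{k}}G_{\tau}(x-z)\le C\e^{-c|x-k|^{2}}$. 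Taking expectations produces the series $C\sum_{k}\e^{p\tilde\xi|k|}\e^{-cp|x-k|^{2}}A_{p}$; integrating against $\e^{-p\xi|x|}\,\dif x$ and summing the resulting exponentially decaying series in $k$ (again using $\xi>\tilde\xi$) gives finiteness.

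The main obstacle is therefore the maximal estimate $A_{p}<\infty$, i.e.\ controlling $\sup_{z\in Q_0}\zeta_x(z)$, which I would obtain from a spatial-regularity estimate for $z\mapsto Z^{\ssup\beta}_{s,t}(z,x)$ via Kolmogorov's continuity theorem. When $R$ is smooth the field is spatially smooth and the bound is immediate; in the white-noise case $R=\delta$, $d=1$, the field has Hölder-$(\tfrac12-)$ regularity, and increment bounds of the form $\mathbb{E}[|Z^{\ssup\beta}_{s,t}(z,x)-Z^{\ssup\beta}_{s,t}(z',x)|^{p}]\le C|z-z'|^{p(1/2-\eps)}$ follow from the Duhamel formula \cref{eq:Zmild} and the Burkholder--Davis--Gundy inequality (or may be quoted from the literature on the stochastic heat equation). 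Since the law of the increments is stationary in $z$, Kolmogorov's theorem gives $A_{p}<\infty$ uniformly in the cube for $p$ large, and then for every $p\in[1,\infty)$ since $L^{p}(\Omega)\subseteq L^{p'}(\Omega)$ for $p'\le p$. Finally, the torus case $\mathcal{X}=(\mathbb{R}/\mathbb{Z})^{d}$ is strictly easier: the weights are constant and the domain compact, so the cube decomposition is finite and the Green's function $G_\tau$ is bounded above, whence all summation and decay issues disappear and only the maximal estimate $A_{p}<\infty$ is needed, obtained exactly as above.
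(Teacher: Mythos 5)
Your proof is correct, but it takes a genuinely different route from the paper's. The paper's proof is \emph{pathwise}: its key step is the almost-sure global envelope \cref{eq:introXi}, i.e.\ a single random constant $\Xi$ with $Z_{s,t}^{\ssup\beta}(z,x)\le\Xi\,\e^{\kappa(|x|+|z|)-|z-x|^2/(2(t-s))}$ for all $x,z$ simultaneously (quoted from \cite[(3.14)]{AJRAS22a} when $R=\delta$, and proved by a union bound over $\mathbb{Z}^d$ plus derivative moment bounds when $R$ is smooth); choosing $\kappa<(\xi-\tilde\xi)/2$, both \cref{eq:Zpinftybd} and \cref{eq:Zpqbd} then follow by deterministic exponent bookkeeping, and the torus case is dispatched by continuity alone. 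You instead bound the \emph{expectations} of the two integrals and conclude by Tonelli. This buys you something for \cref{eq:Zpqbd}: your argument there needs no supremum control at all, only the pointwise moment bounds of \cref{lem:momentbd} together with Minkowski/Jensen and a Gaussian convolution estimate, which is more elementary than the paper's route for that half. For \cref{eq:Zpinftybd} your cube decomposition with $(\sup_k a_k)^p\le\sum_k a_k^p$ and the local maximal estimate $A_p=\mathbb{E}[(\sup_{Q_0}\zeta_x)^p]<\infty$ is the moment-form analogue of the paper's union-bound argument, and it relies on the same underlying technical input (local supremum/regularity control of the propagator, via Kolmogorov continuity from increment bounds in the $R=\delta$ case, or derivative moment bounds in the smooth case). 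Two small points of rigor to tighten: the phrase ``when $R$ is smooth the field is spatially smooth and the bound is immediate'' is too quick --- almost-sure smoothness does not by itself give moment bounds on local suprema; you need quantitative moment bounds on increments or derivatives, exactly as the paper notes. Also, when $p=1$ your conjugate exponent is $q=\infty$, so \cref{eq:Zpqbd} should in that case be read as \cref{eq:Zpinftybd} rather than handled by your Jensen argument. Neither point is a genuine gap; your expectation bounds are in fact slightly stronger conclusions (finiteness in $L^1(\Omega)$, not merely almost surely) than what the lemma asserts.
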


\begin{proof}
If $\mathcal{X}=(\mathbb{R}/\mathbb{Z})^{d}$, then this follows simply
from the continuity of ${Z}_{s,t}^{\ssup\beta}(z,x)$ in $z$
and $x$.

Now suppose that $\mathcal{X}=\mathbb{R}^{d}$. For any $\kappa>0$,
we have
\begin{equation}
    \sup_{z,x\in\mathcal{X}}\left[\e^{-\kappa|x|-\kappa|z|+|z-x|^{2}/(2(t-s))}{Z}_{s,t}^{\ssup\beta}(z,x)\right]<\infty,\qquad\text{a.s.}\label{eq:introXi}
\end{equation}
This is a consequence of \cite[(3.14)]{AJRAS22a} in the case $d=1$ and $R=\delta$.
Indeed, we have
\begin{align*}
  \sup_{x,z\in\mathbf{R}}& \left[\e^{-\kappa|x| -\kappa |z|}G_{t-s}(z-x)^{-1}Z_{s,t}^{\ssup \beta}(z,x)\right]\\
                         &=\sup_{K\ge 1}\left[\sup_{x,z\;:\;|x|\vee|z|\in [K-1,K]}\e^{-\kappa|x| -\kappa |z|}G_{t-s}(z-x)^{-1}Z_{s,t}^{\ssup \beta}(z,x)\right]\\
                         &\le C_\kappa\sup_{K\ge 1}\left[K^{-4}\sup_{x,z\;:\;|x|,|z|\le K}G_{t-s}(z-x)^{-1}Z_{s,t}^{\ssup \beta}(z,x)\right],
\end{align*} for an absolute constant $C_\kappa<\infty$ depending only on $\kappa$, and then the last quantity is almost surely finite by \cite[(3.14)]{AJRAS22a}. %
    In the other cases it can be proved similarly, using the spatial stationarity
of $G_{t-s}(z-x)^{-1}{Z}_{s,t}^{\ssup\beta}(z,x)$ and moment
bounds (see \cref{eq:moment-bd}) on the solution to the MSHE along with a union bound to control the quantity inside the supremum of \cref{eq:introXi} on $\mathbb{Z}^d$, and then using similar moment bounds on the derivative (which are easily obtained when $R$ is smooth) to extend this control to $\mathbb{R}^d$.

As a consequence of \cref{eq:introXi}, there is a random variable $\Xi$ such that, with
probability $1$,
\[
\e^{\tilde{\xi}|z|-\xi|x|}{Z}_{s,t}^{\ssup\beta}(z,x)\le\Xi\e^{(\kappa+\tilde{\xi})|z|+(\kappa-\xi)|x|-|z-x|^{2}/(2(t-s))}\qquad\text{for all }x,z\in\mathbb{R}^{d}.
\]
Now we choose $\kappa\in(0,\frac{\xi-\tilde{\xi}}{2})$, which
allows us to find a constant $\tilde{\kappa}>0$ such that
\[
(\kappa+\tilde{\xi})|z|+(\kappa-\xi)|x|-\frac{|z-x|^{2}}{2(t-s)}\le-\tilde{\kappa}(|x|+|z|)+\tilde{\kappa}^{-1}\qquad\text{for all }x,z\in\mathbb{R}^{d},
\]
so \cref{eq:introXi} becomes
\[
\e^{\tilde{\xi}|z|-\xi|x|}{Z}_{s,t}^{\ssup\beta}(z,x)\le\Xi\e^{-\tilde{\kappa}(|x|+|z|)+\tilde{\kappa}^{-1}}\qquad\text{for all }x,z\in\mathbb{R}^{d}.
\]
Then \cref{eq:Zpinftybd,eq:Zpqbd} follow easily.
\end{proof}

\section{Convergence of marginal distributions}\label{sec:onept}

In this section we show the convergence of the marginal distributions of the AMSHE solutions.

We consider the two initial value problems
\begin{align*}
\dif u_{t}^{\ssup T}(x) & =\frac{1}{2}\Delta u_{t}^{\ssup T}(x)\dif t+\beta u_{t}^{\ssup T}(x)\dif U_{t}(x),\qquad t>-T,x\in\mathcal{X};\\
u_{-T}^{\ssup T}(x) & =1
\end{align*}
and
\begin{align}
\dif v_{t}^{\ssup T}(x) & =\frac{1}{2}\Delta v_{t}^{\ssup T}(x)\dif t+\beta v_{t}^{\ssup T}(x)\dif U_{t}(x)+\alpha\dif V_{t}(x),\qquad t>-T,x\in\mathcal{X};\label{eq:vSPDE}\\
v_{-T}^{\ssup T}(x) & =0.\label{eq:vTic}
\end{align}
By the Duhamel formula \cref{eq:Duhamel}, we have for all $t\ge-T$
and $x\in\mathcal{X}$ that
\begin{equation}
u_{t}^{\ssup T}(x)=\int_{\mathcal{X}}{Z}_{-T,t}^{\ssup\beta}(y,x)\,\dif y\label{eq:uDuhamel}
\end{equation}
and
\begin{equation}
v_{t}^{\ssup T}(x)=\alpha\int_{-T}^{t}\int_{\mathcal{X}}{Z}_{r,t}^{\ssup\beta}(y,x)\,\dif V_{r}(y).\label{eq:vDuhamel}
\end{equation}
For $s<t$, we let $\mathscr{F}_{s,t}$ and $\mathscr{G}_{s,t}$ be
the $\sigma$-algebras generated by $(\dif U_{r})_{r\in[s,t]}$ and
$(\dif V_{r})_{r\in[s,t]}$, respectively. We let $\mathscr{H}_{s,t}=\mathscr{F}_{s,t}\vee\mathscr{G}_{s,t}$. We also recall that $\mathscr{F}$ is the $\sigma$-algebra generated by $(\dif U_r)_{r\in\mathbb{R}}$.

For a finite signed measure $\mu$ on $\mathcal{X}$ (i.e.\ one satisfying $|\mu|(\mathcal{X})<\infty$), we define
\begin{equation}
M_{T}(\mu)=\int_{\mathcal{X}}u_{0}^{\ssup T}(x)\,\dif\mu(x)\qquad\text{and}\qquad N_{T}(\mu)=\int_{\mathcal{X}}v_{0}^{\ssup T}(x)\,\dif\mu(x).\label{eq:Mdef}
\end{equation}
We note that $M_{T}(\delta_{x})=u_{0}^{\ssup T}(x)$, considered as a
process depending on $T$ for fixed $x$, is the martingale considered
in \cite{MSZ16}. (There it is called $Z_\eps$; see \cite[Remark~3.3]{MSZ16}.)
\begin{prop}\label{prop:martingaleQVs}
    For any finite signed measure $\mu$, the processes $(M_{T}(\mu))_{T\ge 0}$
and $(N_{T}(\mu))_{T\ge 0}$ are continuous $(\mathscr{H}_{-T,0})_{T}$-martingales.
Their quadratic variation processes satisfy
\begin{equation}
    \begin{aligned}\alpha^{2}&\dif[M(\mu)]_{T}\\& =\alpha^{2}\beta^{2}\iiiint_{\mathcal{X}^{4}}{Z}_{-T,0}^{\ssup\beta}(y_{1},x_{1}){Z}_{-T,0}^{\ssup\beta}(y_{2},x_{2})R(y_{1}-y_{2})\,\dif y_{1}\,\dif y_{2}\,\dif\mu(x_{1})\,\dif\mu(x_{2})\\
 & =\beta^{2}\dif[N(\mu)]_{T}
\end{aligned}
\label{eq:QVrelation}
\end{equation}
and
\begin{equation}
\dif[M(\mu),N(\mu)]_{T}=0.\label{eq:nojointQV}
\end{equation}
\end{prop}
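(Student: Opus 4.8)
The plan is to represent both martingales as stochastic integrals carrying a common integrand and then reverse time so that these become standard forward Itô integrals. Applying the backward-in-time Duhamel formula \cref{eq:Zmild} with $s=-T$ and $t=0$ and integrating in $y$ over $\mathcal{X}$, using that $\int_{\mathcal{X}} G_T(x-y)\,\dif y = \int_{\mathcal{X}} G_{r+T}(y-z)\,\dif y = 1$, I would first obtain $u_0^{\ssup T}(x) = 1 - \beta\int_{-T}^0\int_{\mathcal{X}} {Z}_{r,0}^{\ssup\beta}(z,x)\,\dif U_r(z)$. Writing $\Phi_r^\mu(z) = \int_{\mathcal{X}} {Z}_{r,0}^{\ssup\beta}(z,x)\,\dif\mu(x)$ and integrating against $\mu$ (a stochastic Fubini step), together with the analogous manipulation of \cref{eq:vDuhamel}, this yields $M_T(\mu) = \mu(\mathcal{X}) - \beta\int_{-T}^0\int_{\mathcal{X}} \Phi_r^\mu(z)\,\dif U_r(z)$ and $N_T(\mu) = \alpha\int_{-T}^0\int_{\mathcal{X}} \Phi_r^\mu(z)\,\dif V_r(z)$. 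The crucial structural observation is that \emph{both} integrals carry the same integrand $\Phi_r^\mu$, differing only in the driving noise and in the prefactor.

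Next I would reverse time. Setting $\hat{U}_\sigma(z) = U_{-\sigma}(z)$ and $\hat V_\sigma(z)=V_{-\sigma}(z)$, which are again white-in-time noises with spatial covariance $R$, and noting that ${Z}_{-\sigma,0}^{\ssup\beta}(z,x)$, and hence $\Phi_{-\sigma}^\mu(z)$, is $\mathscr{F}_{-\sigma,0}$-measurable, the (backward-Itô) integrals above become standard forward Itô integrals in the parameter $T$ adapted to the filtration $(\mathscr{H}_{-T,0})_T$. The martingale property and continuity of $M_T(\mu)$ and $N_T(\mu)$ then follow immediately from the corresponding properties of Itô integrals; the independence of $U$ and $V$ guaranteed by \cref{eq:uncorrelated} ensures that $\Phi^\mu$, though built only from the $U$-noise, is adapted for both integrals.

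For the quadratic variations I would then simply read off the Itô quadratic-variation formula. Because $U$ and $V$ share the covariance kernel $R$ in \cref{eq:UVQV}, both $\dif[M(\mu)]_T$ and $\dif[N(\mu)]_T$ equal the same spatial double integral of $\Phi_{-T}^\mu(y_1)\Phi_{-T}^\mu(y_2)R(y_1-y_2)$, up to the respective prefactors $\beta^2$ and $\alpha^2$; expanding each $\Phi_{-T}^\mu$ back into a $\mu$-integral of ${Z}_{-T,0}^{\ssup\beta}$ produces exactly the quadruple integral in \cref{eq:QVrelation} and gives $\alpha^2\,\dif[M(\mu)]_T = \beta^2\,\dif[N(\mu)]_T$. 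The cross-variation \cref{eq:nojointQV} vanishes because \cref{eq:uncorrelated} makes $[M(\mu),N(\mu)]$ an integral against $\mathbb{E}[\dif U\,\dif V]=0$.

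The main obstacle I anticipate is not the algebra but the justification of the interchanges: applying a stochastic Fubini theorem to move the finite signed measure $\mu$ and the spatial variable $y$ through the stochastic integrals, and verifying that $\Phi^\mu$ is genuinely square-integrable against the $R$-weighted noise so that all Itô integrals and their quadratic variations are well-defined and finite. This is where I would lean on the stationarity and moment bounds of \cref{lem:momentbd} together with the weighted integrability estimates \cref{eq:Zpinftybd,eq:Zpqbd}; since $\mu$ is a finite signed measure, $|\mu|(\mathcal{X})<\infty$ lets me dominate $\Phi_r^\mu$ by the supremum bounds on ${Z}_{r,0}^{\ssup\beta}$ and conclude the needed moment finiteness uniformly on compact $T$-intervals.
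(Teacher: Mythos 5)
Your proposal is correct and takes essentially the same route as the paper: both use the backward Duhamel formula \cref{eq:Zmild} (together with \cref{eq:vDuhamel}) to write $M_T(\mu)=\mu(\mathcal{X})-\beta\int_{-T}^0\int_{\mathcal{X}}\Phi_r^\mu(y)\,\dif U_r(y)$ and $N_T(\mu)=\alpha\int_{-T}^0\int_{\mathcal{X}}\Phi_r^\mu(y)\,\dif V_r(y)$ with the common integrand $\Phi_r^\mu(y)=\int_{\mathcal{X}}Z_{r,0}^{\ssup\beta}(y,x)\,\dif\mu(x)$, read off the quadratic and cross variations from the covariance structure \cref{eq:UVQV,eq:uncorrelated}, and invoke the moment bounds of \cref{lem:momentbd} to upgrade from local martingale to martingale. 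Your explicit time-reversal making the backward integrals into forward Itô integrals adapted to $(\mathscr{H}_{-T,0})_T$ is simply a more careful spelling-out of what the paper treats implicitly when it declares the integral term of \cref{eq:MmuIto} a continuous local martingale.
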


\begin{proof}
    Let us first show that the process $(M_T(\mu))_{T\geq 0}$ is a martingale. Indeed, we have by \cref{eq:uDuhamel,eq:Mdef,eq:Zmild}
that
\begin{align}
    M_T(\mu) &= \iint_{\mathcal{X}^2} {Z}^{\ssup\beta}_{-T,0}(z,x)\,\dif z\,\dif \mu (x)\notag \\&= \iint_{\mathcal{X}^2} \left(G_{T}(x-y)-\beta \int_{-T}^0\int_{\mathcal{X}} G_{r+T}(z-y){Z}^{\ssup \beta}_{r,0}(y,x)\,\dif U_r(y)\right)\,\dif y\,\dif\mu(x)\notag
           \\& = \mu(\mathcal{X})- \int_{-T}^0\int_{\mathcal{X}}\left(\beta\int_{\mathcal{X}} {Z}^{\ssup \beta}_{r,0}(y,x)\,\dif \mu(x)\right)\,\dif U_r(y).\label{eq:MmuIto}
\end{align}
The last term is a stochastic integral and hence a continuous local martingale. We compute the differential quadratic variation %
    \begin{align}
        \frac{\dif [M(\mu)]_T}{\dif T} %
                        &= \beta^2\iiiint_{\mathcal{X}^4} R(y_1-y_2){Z}_{-T,0}^{\ssup \beta}(y_1,x_1){Z}_{-T,0}^{\ssup \beta}(y_2,x_2)\,\dif y_1 \,\dif y_2\,\dif \mu (x_1)\,\dif \mu(x_2),
\end{align}
and hence conclude the first identity in \cref{eq:QVrelation}. The moment bound \cref{lem:momentbd} implies that the process $(M_T(\mu))_T$ is indeed a martingale.

On the other hand, \cref{eq:vDuhamel} and \cref{eq:Mdef}
imply that
\begin{equation}
    N_{T}(\mu)=\alpha\int_{\mathcal{X}}\int_{-T}^{0}\int_{\mathcal{X}}{Z}_{r,0}^{\ssup\beta}(y,x)\,\dif V_{r}(y)\,\dif\mu(x),\label{eq:NDuhamel}
\end{equation}
whence Itô's formula implies that
\begin{equation}
\dif N_{T}(\mu)=-\alpha\iint_{\mathcal{X}^{2}}{Z}_{-T,0}^{\ssup\beta}(y,x)\,\dif V_{-T}(y)\,\dif\mu(x).\label{eq:NmuIto}
\end{equation}
Taking the quadratic variation using \cref{eq:UVQV}, we see
that
\begin{equation}
\frac{\dif[N(\mu)]_{T}}{\dif T}=\alpha^{2}\iiiint_{\mathcal{X}^{2}}{Z}_{-T,0}^{\ssup\beta}(y_{1},x_{1}){Z}_{-T,0}^{\ssup\beta}(y_{2},x_{2})R(x_{1}-x_{2})\,\dif y_{1}\,\dif y_{2}\,\dif\mu(x_{1})\,\dif\mu(x_{2}),\label{eq:NmuQV}
\end{equation}
and hence that the second identity in \cref{eq:QVrelation} holds. Standard regularity and moment bounds on ${Z}^{\ssup\beta}_{-T,0}(y,x)$ again imply that $(N_T(\mu))_T$ is a continuous martingale. Moreover,
\cref{eq:nojointQV} can be seen from \cref{eq:MmuIto,eq:NmuIto}
using the assumption \cref{eq:uncorrelated}.
\end{proof}
\begin{cor}\label{cor:dubinsschwarz}
    For any finite signed measure $\mu$ on $\mathcal{X}$, there is a filtration $\{\mathscr{K}_q\}_{q\ge 0}$ and
a standard two-dimensional Brownian motion $(W_{q}(\mu),X_{q}(\mu)))_{q\ge0}$,
adapted with respect to a filtration $\{\mathscr{K}_{q}\}_{q\ge0}$
and with $(W_{0}(\mu),X_{0}(\mu))=(\mu(\mathcal{X}),0)$, such that,
for each $T\ge0$, $[M(\mu)]_{T}$ is a $\mathscr{K}_{q}$-stopping
time and 
\begin{equation}
(M_{T}(\mu),(\beta/\alpha)N_{T}(\mu))=(W_{[M(\mu)]_{T}}(\mu),X_{[M(\mu)]_{T}}(\mu)).\label{eq:timechange}
\end{equation}
\end{cor}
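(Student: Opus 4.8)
The plan is to apply the multidimensional Dubins--Schwarz (time-change) theorem to the vector-valued martingale $(M_T(\mu),(\beta/\alpha)N_T(\mu))_{T\ge 0}$, whose two components are continuous martingales for the common filtration $(\mathscr{H}_{-T,0})_T$ by \cref{prop:martingaleQVs}. (Here we use implicitly that $\alpha>0$, so that the ratio $\beta/\alpha$ is meaningful.) First I would record the matrix of quadratic covariations. Writing $\tilde N_T=(\beta/\alpha)N_T(\mu)$ and $A_T=[M(\mu)]_T$, the first identity in \cref{eq:QVrelation} gives $[\tilde N]_T=(\beta^2/\alpha^2)[N(\mu)]_T=[M(\mu)]_T=A_T$, while \cref{eq:nojointQV} gives $[M(\mu),\tilde N]_T=(\beta/\alpha)[M(\mu),N(\mu)]_T=0$. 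Hence the matrix quadratic variation of the pair equals $A_T I_2$, with $I_2$ the $2\times 2$ identity. Evaluating \cref{eq:MmuIto} and \cref{eq:NDuhamel} at $T=0$ shows the pair starts at $(M_0(\mu),\tilde N_0)=(\mu(\mathcal{X}),0)$.

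Next I would perform the time change. Let $\tau_q=\inf\{T\ge 0:A_T>q\}$ be the right-continuous inverse of the continuous nondecreasing clock $A$, and set $\mathscr{K}_q=\mathscr{H}_{-\tau_q,0}$. Because both components run on the \emph{same} clock $A$, a single time change turns the pair into a process whose covariation matrix at parameter $q$ is $q\,I_2$; the multidimensional Dubins--Schwarz theorem (equivalently, one-dimensional Dubins--Schwarz applied to each component, with independence of the two limiting Brownian motions forced by the vanishing cross-variation) produces a standard two-dimensional Brownian motion $(W_q(\mu),X_q(\mu))$ starting at $(\mu(\mathcal{X}),0)$ with $(M_{\tau_q}(\mu),\tilde N_{\tau_q})=(W_q(\mu),X_q(\mu))$. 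Undoing the time change yields the identity \cref{eq:timechange}. That $A_T=[M(\mu)]_T$ is a $\mathscr{K}_q$-stopping time is the standard duality $\{A_T\le q\}=\{\tau_q\ge T\}\in\mathscr{K}_q$ between a continuous clock and its inverse.

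The main technical point, as usual in Dubins--Schwarz, is that the clock $A$ need not be strictly increasing and that $A_\infty$ may be finite with positive probability. On any flat stretch of $A$ both $M(\mu)$ and $\tilde N$ are a.s.\ constant, since a continuous martingale is constant on intervals of constancy of its quadratic variation; thus \cref{eq:timechange} is unambiguous there and no information is lost. If $A_\infty<\infty$ on an event of positive probability, the time-changed process is only defined up to the random parameter $A_\infty$ and must be extended to all of $[0,\infty)$ by splicing in an independent two-dimensional Brownian motion on a possibly enlarged probability space --- precisely the extension invoked in the proof of \cref{prop:howtogetCauchy}. Neither point affects the displayed conclusion, which concerns only the values of the Brownian motion at the times $A_T$.
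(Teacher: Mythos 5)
Your argument is correct and is essentially the paper's own proof: the paper disposes of \cref{cor:dubinsschwarz} in one line, citing the Dubins--Schwarz theorem together with \cref{eq:QVrelation,eq:nojointQV}, which is exactly the time change you carry out on the pair $(M_T(\mu),(\beta/\alpha)N_T(\mu))$. The technical points you spell out --- constancy of the martingales on flat stretches of the clock, and the extension of the time-changed process by an independent Brownian motion when $[M(\mu)]_\infty<\infty$ (which indeed occurs almost surely here, by \cref{eq:QVconverges}) --- are left implicit in the paper but are resolved in the standard way you describe.
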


\begin{proof}
This follows immediately from the Dubins--Schwarz theorem and \crefrange{eq:QVrelation}{eq:nojointQV}.
\end{proof}
\begin{prop}
    \label{prop:Mconverges}For any finite signed measure $\mu$ on $\mathcal{X}$,
the limit
\begin{equation}
M_{\infty}(\mu)\coloneqq\lim_{T\to\infty}M_{T}(\mu)\label{eq:Mconverges}
\end{equation}
exists almost surely. Also, we have
\begin{equation}
[M(\mu)]_{\infty}=\lim_{T\to\infty}[M(\mu)]_{T}<\infty\label{eq:QVconverges}
\end{equation}
almost surely, and
\begin{equation}
\lim\limits _{T\to\infty}N_{T}(\mu)=\alpha\beta^{-1}X_{[M(\mu)]_{\infty}}(\mu)\eqqcolon N_{\infty}(\mu)\label{eq:limitingdistn-1}
\end{equation}
almost surely.
\end{prop}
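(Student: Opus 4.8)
The plan is to exploit the martingale structure established in \cref{prop:martingaleQVs} together with the time-change representation of \cref{cor:dubinsschwarz}. First I would establish the almost-sure convergence of $M_T(\mu)$ in \cref{eq:Mconverges}. The key observation is that $M_T(\mu)=\int_{\mathcal{X}}u_0^{\ssup T}(x)\,\dif\mu(x)$ is a continuous martingale, but it need not be nonnegative when $\mu$ is a signed measure, so I cannot directly invoke the martingale convergence theorem for nonnegative martingales. Instead, I would decompose $\mu=\mu_+-\mu_-$ into its positive and negative parts. For each of the nonnegative finite measures $\mu_\pm$, the process $M_T(\mu_\pm)=\int_{\mathcal{X}}u_0^{\ssup T}(x)\,\dif\mu_\pm(x)$ is a \emph{nonnegative} continuous martingale (since $u_0^{\ssup T}\ge 0$ by the positivity of the multiplicative-noise propagator $Z^{\ssup\beta}$, which follows from the Feynman--Kac representation \cref{eq:feynmankac} or its white-noise analogue). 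A nonnegative martingale converges almost surely as $T\to\infty$ by the supermartingale convergence theorem. Taking the difference $M_T(\mu)=M_T(\mu_+)-M_T(\mu_-)$ then gives almost-sure convergence of $M_T(\mu)$ to a finite limit $M_\infty(\mu)$.

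Next I would prove \cref{eq:QVconverges}, that the quadratic variation $[M(\mu)]_T$ converges to a finite limit as $T\to\infty$. Again I would argue first for nonnegative $\mu$. Since $M_T(\mu_\pm)$ is a nonnegative martingale converging almost surely, it is in particular bounded almost surely along the trajectory, so by the Dubins--Schwarz representation the associated time-changed Brownian motion is absorbed before escaping to infinity; concretely, a continuous nonnegative local martingale that converges a.s.\ has almost surely finite quadratic variation (otherwise the Dubins--Schwarz Brownian motion would be run for infinite time and thus oscillate, contradicting convergence). This gives $[M(\mu_\pm)]_\infty<\infty$ a.s.\ for the nonnegative parts. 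To pass to the signed measure, I would use the polarization/bilinearity of quadratic covariation: from \cref{eq:QVrelation}, $\dif[M(\mu)]_T$ is the integral of $R(y_1-y_2)$ against the product measure built from $\mu\otimes\mu$, so $[M(\mu)]_T$ is a quadratic form in $\mu$ and can be bounded in terms of the contributions from $\mu_+$ and $\mu_-$ via Cauchy--Schwarz for the covariation, giving finiteness of $[M(\mu)]_\infty$.

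Finally, for \cref{eq:limitingdistn-1}, I would invoke \cref{cor:dubinsschwarz} directly: there we have $(\beta/\alpha)N_T(\mu)=X_{[M(\mu)]_T}(\mu)$, where $(W,X)$ is a two-dimensional Brownian motion. Since $[M(\mu)]_T\uparrow[M(\mu)]_\infty<\infty$ almost surely (monotonicity of quadratic variation, already established), and since $X$ is a continuous process, the composition converges: $X_{[M(\mu)]_T}(\mu)\to X_{[M(\mu)]_\infty}(\mu)$ almost surely as $T\to\infty$. Rescaling by $\alpha\beta^{-1}$ yields $N_T(\mu)\to\alpha\beta^{-1}X_{[M(\mu)]_\infty}(\mu)=:N_\infty(\mu)$ almost surely, which is exactly \cref{eq:limitingdistn-1}.

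I expect the main obstacle to be \cref{eq:QVconverges}, the almost-sure finiteness of the limiting quadratic variation for a \emph{signed} measure. The convergence of $M_T(\mu)$ itself is routine once positivity of the propagator reduces it to nonnegative-martingale convergence, and \cref{eq:limitingdistn-1} is then a soft consequence of continuity of the time change. The finiteness of $[M(\mu)]_\infty$ for signed $\mu$ requires care because the integrand $R(y_1-y_2)$ in \cref{eq:QVrelation} is not of a fixed sign once $\mu$ is signed, so one cannot argue monotone finiteness directly; I would handle this by the decomposition and covariation--Cauchy--Schwarz argument sketched above, controlling $[M(\mu)]_\infty$ by $[M(\mu_+)]_\infty$ and $[M(\mu_-)]_\infty$, each finite by the nonnegative case.
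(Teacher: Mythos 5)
Your proposal is correct and takes essentially the same route as the paper: Jordan decomposition $\mu=\mu^{+}-\mu^{-}$ plus nonnegative (super)martingale convergence for \cref{eq:Mconverges}, finiteness of the quadratic variation deduced from almost-sure convergence, and continuity of the Dubins--Schwarz time change from \cref{cor:dubinsschwarz} for \cref{eq:limitingdistn-1}. The only difference is cosmetic: the paper gets \cref{eq:QVconverges} by applying the standard equivalence (for continuous local martingales, a.s.\ convergence and a.s.\ finiteness of the quadratic variation coincide up to null sets) directly to the signed martingale $M_T(\mu)$, whereas you detour through $[M(\mu^{\pm})]_\infty$ and a Kunita--Watanabe/Cauchy--Schwarz bound, which is valid but unnecessary.
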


\begin{proof}
If $\mu$ is an unsigned measure, then $(M_{T}(\mu))_{T}$ is a nonnegative
(super)martingale, as can be seen directly from the definition \cref{eq:Mdef},
so it converges almost surely by the supermartingale convergence theorem.
In the general case of $\mu$ a signed measure, we can write $\mu=\mu^{+}-\mu^{-}$
for unsigned measures $\mu^{+}$ and $\mu^{-}$, and $M_{T}(\mu)=M_{T}(\mu^{+})-M_{T}(\mu^{-})$.
Since $M_{T}(\mu^{+})$ and $M_{T}(\mu^{-})$ both converge almost
surely, we see that $M_{T}(\mu)$ converges almost surely as well.

The fact that $M_{T}(\mu)$ converges almost surely immediately implies
\cref{eq:QVconverges}, and then \cref{eq:limitingdistn-1}
follows from \cref{eq:timechange}.
\end{proof}
\begin{lem}
    Let $p\in(1,\infty)$ and assume that $\beta\in I_{L^{p}}$. For any finite
signed measure $\mu$ on $\mathcal{X}$, we have 
\begin{equation}
\lim_{T\to\infty}\mathbb{E}\bigg[\big([M(\mu)]_{\infty}-[M(\mu)]_{T}\big)^{p/2}\bigg]=0.\label{eq:LpQVconvergence}
\end{equation}
\end{lem}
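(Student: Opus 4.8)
The plan is to reduce \cref{eq:LpQVconvergence} to the single estimate $[M(\mu)]_\infty\in L^{p/2}(\Omega)$. Indeed, the quadratic variation $T\mapsto[M(\mu)]_T$ is nondecreasing, and by \cref{prop:Mconverges} it converges almost surely to the finite limit $[M(\mu)]_\infty$; hence $0\le[M(\mu)]_\infty-[M(\mu)]_T\searrow 0$ almost surely as $T\to\infty$. Consequently $([M(\mu)]_\infty-[M(\mu)]_T)^{p/2}\to 0$ almost surely and is dominated by $[M(\mu)]_\infty^{p/2}$. Once the latter is known to be integrable, the dominated convergence theorem yields \cref{eq:LpQVconvergence}.

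To establish $[M(\mu)]_\infty\in L^{p/2}(\Omega)$, I would first show that the martingale $(M_T(\mu))_T$ is bounded in $L^p(\Omega)$. By the definition \cref{eq:ILpdef}, the hypothesis $\beta\in I_{L^p}$ gives $\sup_T\mathbb{E}[|u_0^{\ssup T}(0)|^p]<\infty$. Since the noise $(\dif U_t)$ is spatially homogeneous, the random field $x\mapsto u_0^{\ssup T}(x)$ is stationary, so $\sup_{T}\sup_{x\in\mathcal{X}}\mathbb{E}[|u_0^{\ssup T}(x)|^p]=\sup_T\mathbb{E}[|u_0^{\ssup T}(0)|^p]=:C^p<\infty$. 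Recalling from \cref{eq:Mdef} that $M_T(\mu)=\int_{\mathcal{X}}u_0^{\ssup T}(x)\,\dif\mu(x)$, Minkowski's integral inequality gives
\[
\|M_T(\mu)\|_{L^p(\Omega)}\le\int_{\mathcal{X}}\|u_0^{\ssup T}(x)\|_{L^p(\Omega)}\,\dif|\mu|(x)\le C\,|\mu|(\mathcal{X}),
\]
uniformly in $T$, so that $(M_T(\mu))_T$ is bounded in $L^p(\Omega)$.

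Next I would apply the Burkholder--Davis--Gundy inequality to the martingale $T\mapsto M_T(\mu)-M_0(\mu)$, which starts at $0$ and shares the quadratic variation of $M(\mu)$. Together with Doob's maximal inequality (available since $p>1$), this gives
\[
\mathbb{E}\big[[M(\mu)]_\infty^{p/2}\big]\le C_p\,\mathbb{E}\Big[\sup_T|M_T(\mu)-M_0(\mu)|^p\Big]\le C_p'\,\sup_T\mathbb{E}[|M_T(\mu)-M_0(\mu)|^p]<\infty,
\]
where the final bound uses the $L^p$-boundedness just proved together with $M_0(\mu)=\mu(\mathcal{X})$. This is exactly the integrability of $[M(\mu)]_\infty^{p/2}$ needed to run the dominated convergence argument described above.

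The main obstacle is the transfer from the pointwise hypothesis to a bound on the functional $M_T(\mu)$: the hypothesis $\beta\in I_{L^p}$ is a statement about $u_0^{\ssup T}$ at a single point, and it must be upgraded to a uniform $L^p$ bound on the measure-integrated $M_T(\mu)$. This hinges on the spatial stationarity of the solution field (a consequence of the homogeneity of the noise) to make the pointwise moment bound uniform in $x$, followed by Minkowski's integral inequality to accommodate integration against the possibly signed measure $\mu$. Everything downstream is a routine combination of the Burkholder--Davis--Gundy and Doob inequalities with dominated convergence.
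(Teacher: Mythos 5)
Your proof is correct and follows essentially the same route as the paper's: establish $\mathbb{E}\big[[M(\mu)]_\infty^{p/2}\big]<\infty$ via the Burkholder--Davis--Gundy inequality applied to the centered martingale $M_T(\mu)-\mu(\mathcal{X})$ together with Doob's maximal inequality (plus monotone convergence), then conclude by dominated convergence using the domination $([M(\mu)]_\infty-[M(\mu)]_T)^{p/2}\le[M(\mu)]_\infty^{p/2}$. The only difference is that you spell out the transfer from the pointwise hypothesis $\beta\in I_{L^p}$ to the $L^p$-boundedness of $(M_T(\mu))_T$ via spatial stationarity and Minkowski's integral inequality, a step the paper leaves implicit.
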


\begin{proof}
This is a general and standard fact about $L^{p}$-bounded martingales;
we include a proof for convenience. Since the martingale $(M_{T}(\mu))_{T\ge0}$
is bounded in $L^{p}(\Omega)$, we have by the Burkholder--Davis--Gundy
inequality a constant $C_{p}<\infty$ such that
\begin{equation}
    \mathbb{E}\big([M(\mu)]_{\infty}^{p/2}\big)\le C_{p}\mathbb{E}\bigg[\sup_{T\ge0}|M_{T}(\mu)-\mu(\mathcal{X})|^{p}\bigg].\label{eq:BDG}
\end{equation}
(Here we use the fact that $M_0(\mu) = \mu(\mathcal{X})$ so $(M_T(\mu)-\mu(\mathcal{X}))_T$ is a martingale starting at $0$.)
Also, by Doob's maximal inequality we have for all $\overline{T}>0$
that
\[
    \mathbb{E}\bigg[\sup_{T\in[0,\overline{T}]}|M_{T}(\mu)|^{p}\bigg]\le\left(\frac{p}{p-1}\right)^{p}\mathbb{E}\big[\left|M_{\overline{T}}(\mu)\right|^{p}\big].
\]
Since the right side is bounded by assumption, we obtain by the monotone
convergence theorem that
\begin{equation}
\mathbb{E}\bigg[\sup_{T\in[0,\infty)}|M_{T}(\mu)|^{p}\bigg]<\infty.\label{eq:maximal}
\end{equation}
Combining \cref{eq:BDG} and \cref{eq:maximal}, we see
that $\mathbb{E}([M(\mu)]_{\infty}^{p/2})<\infty$. Since, for each
$T$, we have $([M(\mu)]_{\infty}-[M(\mu)]_{T})^{p/2}\le[M(\mu)]_{\infty}^{p/2}$,
and we have just shown that right side is in $L^{1}(\Omega)$, the
claim \cref{eq:LpQVconvergence} follows from the dominated convergence
theorem.
\end{proof}

\section{Cauchy-distributed limits}\label{sec:cauchy}
In this section we continue from the arguments of the previous section to prove that Cauchy statistics arise from the AMSHE. We also give the proof of \cref{thm:2dthm} on the 2D case. The proofs complete the strategy indicated in the proof of \cref{prop:howtogetCauchy} in the introduction.
\begin{prop}
\label{prop:Cauchylimits}Suppose that $\mu$ is a finite \emph{unsigned}
measure (i.e.~$\mu(A)\ge0$ for all $A\subseteq\mathcal{X}$), and
that $M_{\infty}(\mu)=0$ almost surely. Then $N_{\infty}(\mu)\sim\Cauchy\left(\alpha\beta^{-1}\mu(\mathcal{X})\right)$.
\end{prop}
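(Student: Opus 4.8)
The plan is to specialize the half-plane exit argument sketched in the proof of \cref{prop:howtogetCauchy} to the concrete martingales $M_T(\mu)$ and $N_T(\mu)$, relying on the machinery already assembled in \cref{cor:dubinsschwarz,prop:Mconverges}. First I would dispose of the degenerate case: if $\mu(\mathcal{X})=0$, then the unsigned measure $\mu$ is the zero measure, so $N_T(\mu)\equiv 0$ and the claimed $\Cauchy(0)=\delta_0$ law holds trivially. Assume henceforth that $\mu(\mathcal{X})>0$.

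The backbone is \cref{cor:dubinsschwarz}: the quadratic-variation identities \cref{eq:QVrelation,eq:nojointQV} produce a standard planar Brownian motion $(W_q(\mu),X_q(\mu))_{q\ge 0}$ started at $(\mu(\mathcal{X}),0)$ with
\[
(M_T(\mu),(\beta/\alpha)N_T(\mu))=(W_{[M(\mu)]_T}(\mu),X_{[M(\mu)]_T}(\mu)),
\]
while \cref{prop:Mconverges} gives $N_\infty(\mu)=\alpha\beta^{-1}X_{[M(\mu)]_\infty}(\mu)$ with $[M(\mu)]_\infty<\infty$ almost surely. It therefore suffices to identify the law of $X_{[M(\mu)]_\infty}(\mu)$.

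The crux is to show that $[M(\mu)]_\infty$ coincides with $\sigma\coloneqq\inf\{q\ge 0:W_q(\mu)=0\}$, the first time the first coordinate hits $0$. One inclusion is immediate: by continuity of $W$ and the time-change identity, $W_{[M(\mu)]_\infty}(\mu)=\lim_{T\to\infty}M_T(\mu)=M_\infty(\mu)=0$ by hypothesis, so $\sigma\le[M(\mu)]_\infty$. For the reverse, I would exploit that, since $\mu$ is unsigned, $u_0^{\ssup T}\ge 0$ and hence $M_T(\mu)\ge 0$ is a \emph{nonnegative} continuous martingale, for which $0$ is absorbing. Concretely, were $\sigma<[M(\mu)]_\infty$, then since $T\mapsto[M(\mu)]_T$ is continuous and nondecreasing from $0$ to $[M(\mu)]_\infty$ (and $\sigma>0$ because $W$ starts at $\mu(\mathcal{X})>0$), the intermediate value theorem would furnish a finite $T_0$ with $[M(\mu)]_{T_0}=\sigma$, whence $M_{T_0}(\mu)=W_\sigma(\mu)=0$; absorption at $0$ then forces $M_T(\mu)=0$ for all $T\ge T_0$, so the quadratic variation freezes and $[M(\mu)]_\infty=[M(\mu)]_{T_0}=\sigma$, a contradiction. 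Hence $[M(\mu)]_\infty=\sigma$.

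It remains to read off the distribution. By the construction of the planar Brownian motion, $X_\sigma(\mu)$ is the second coordinate of the exit location of $(W,X)$, started at $(\mu(\mathcal{X}),0)$, from the right half-plane $\{w>0\}$; its law is the Poisson kernel of the half-plane evaluated from $(\mu(\mathcal{X}),0)$, namely $\Cauchy(\mu(\mathcal{X}))$ (as in the references cited after \cref{prop:howtogetCauchy}). Multiplying by the deterministic scalar $\alpha\beta^{-1}$ and using that $c$ times a $\Cauchy(d)$ variable is $\Cauchy(cd)$ gives $N_\infty(\mu)=\alpha\beta^{-1}X_\sigma(\mu)\sim\Cauchy(\alpha\beta^{-1}\mu(\mathcal{X}))$, as claimed. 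The main obstacle is precisely the identification $[M(\mu)]_\infty=\sigma$: the hypothesis that $\mu$ is unsigned, which makes $M_T(\mu)$ a nonnegative martingale with $0$ absorbing, is exactly what guarantees that the accumulated quadratic variation equals the hitting time of the axis rather than something strictly larger, and this is where the unsigned assumption enters in an essential way.
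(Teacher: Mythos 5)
Your proof is correct and takes essentially the same route as the paper's: the Dubins--Schwarz time change of \cref{cor:dubinsschwarz}, the identification of $[M(\mu)]_\infty$ with the first hitting time of zero by the first Brownian coordinate, and the half-plane Poisson kernel (Cauchy) law for the exit location, rescaled by $\alpha\beta^{-1}$. The only immaterial difference is in how the identification is justified: you argue by contradiction using absorption of the nonnegative continuous martingale $M_T(\mu)$ at zero, while the paper notes directly that strict positivity of $M_T(\mu)$ forces $W_q(\mu)>0$ for every $q<[M(\mu)]_\infty$.
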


\begin{proof}
If $\mu(\mathcal{X})=0$ then the result is clear, so assume that
$\mu(\mathcal{X})>0$. This means that $(M_{T}(\mu))_{T}$ is a positive
martingale, so $W_{q}(\mu)>0$ for all $q<[M(\mu)]_{\infty}$. On
the other hand, since $M_{\infty}(\mu)=0$, we have $W_{[M(\mu)]_{\infty}}(\mu)=0$.
This means that 
\begin{equation}
[M(\mu)]_{\infty}=\iota(\mu)\coloneqq\inf\{q\ge0\ :\ W_{q}(\mu)=0\}.\label{eq:Misiotadef}
\end{equation}
Now $X_{\iota(\mu)}(\mu)$ is the vertical coordinate of the exit
time of a standard two-dimensional Brownian motion, started at $(\mu(\mathcal{X}),0)$,
from the right half plane. It is known from standard potential theory (or see \cite[§2.2.4b]{Eva10} or \cite[Lemma~3]{Spi58})
that \begin{equation}X_{\iota(\mu)}(\mu)\sim\Cauchy(\mu(\mathcal{X})).\label{eq:itscauchy}\end{equation}
Therefore, by \cref{eq:limitingdistn-1} and \cref{eq:Misiotadef},
we have
\[
N_{\infty}(\mu)=\alpha\beta^{-1}X_{[M(\mu)]_{\infty}}(\mu)=\alpha\beta^{-1}X_{\iota(\mu)}(\mu)\sim\Cauchy(\alpha\beta^{-1}\mu(\mathcal{X})).\qedhere
\]
\end{proof}
\begin{proof}[Proof of \cref{thm:2dthm}.]
    Define $\beta_\eps = (\log \eps^{-1})^{-1/2}\beta$ and $\alpha_\eps = (\log\eps^{-1})^{-1/2}\alpha$. 
    Let $\mu$ be a (finite) positive linear combination of delta functions.
    Define $M^\eps_{t}(\mu)$ and $N^\eps_{t}=N^\eps_{t}(\mu)$ analogously to \cref{eq:Mdef}, with $\alpha$ and $\beta$ replaced by $\alpha_\eps$ and $\beta_\eps$, respectively. %
    \cref{cor:dubinsschwarz} applies, and gives us a standard two-dimensional Brownian motion $(W_q^\eps(\mu),X_q^\eps(\mu))_{q\ge 0}$, with $(W_0,X_0) = (\mu(\mathcal{X}),0)$, such that for each $t\ge 0$, \[(M_{t}^\eps(\mu),(\beta/\alpha)N_{t}^\eps(\mu)) = (W^\eps_{[M^\eps(\mu)]_{t}}(\mu),X^\eps_{[M^\eps(\mu)]_{t}}(\mu)).\] Actually, since we have not speciifed any coupling between the processes for varying choices of $\eps$, we can assume a coupling such that $(W^\eps_q(\mu),X^\eps_q(\mu))_{q\ge 0}$ does not depend on $\eps$, and so we denote it simply by $(W_q(\mu),X_q(\mu))_{q\ge0}$.
    Similarly to as in the proof of \cref{prop:Cauchylimits}, we have $W_q(\mu)>0$ for all $q<[M^\eps(\mu)]_t$. By \cite[Theorem~2.15]{CSZ17}, we have $\lim\limits_{\eps\searrow 0} M_t^\eps(\mu) =0$ in probability, which means that $\lim\limits_{\eps\searrow 0} W_{[M^\eps(\mu)]_t}(\mu)=0$ in probability. By the continuity of Brownian motion, this implies that $\lim\limits_{\eps\searrow 0}[M^\eps(\mu)]_t=\iota(\mu)$ in probability, with $\iota(\mu)$ defined as in \cref{eq:Misiotadef}. Again using the continuity of Brownian motion, we see that this implies that \[(\beta/\alpha)N^\eps_t(\mu) = X_{[M^\eps(\mu)]_t}(\mu)\xrightarrow[\eps\searrow 0]{\mathrm{P}} X_{\iota(\mu)}\overset{\cref{eq:itscauchy}}\sim \Cauchy(\mu(\mathcal{X})).\]
    This implies \cref{eq:2dconcl}.
\end{proof}

\section{Construction of space-time stationary solutions}\label{sec:stationary}

In this section we construct the space-time stationary solutions and complete the proofs of  \cref{thm:convergence,thm:stsproperties}.

For $x\in\mathcal{X}$, define
\begin{equation}
\tilde{v}(x)=N_{\infty}(\delta_{x})\overset{\cref{eq:NDuhamel}}{=}\alpha\int_{-\infty}^{0}\int_{\mathcal{X}}{Z}_{r,0}^{\ssup\beta}(y,x)\,\dif V_{r}(y).\label{eq:vtildedef}
\end{equation}
From \cref{eq:limitingdistn-1} and Fubini's theorem, we have
with probability $1$ that
\[
\lim_{T\to\infty}v_{0}^{\ssup T}(x)=\tilde{v}(x)\text{ for a.e. }x\in\mathcal{X}.
\]
Indeed, we have 
\[
 \mathbb{E}
 \left(\int_{\mathcal{X}}\mathbf{1}\left\{\lim_{T\to\infty}v_{0}^{\ssup T}(x)\not =\tilde{v}(x)\right\} \,\dif x\right) = \int_{\mathcal{X}}\mathbb{P}\left(
\lim_{T\to\infty}v_{0}^{\ssup T}(x)\not =\tilde{v}(x)\right)\,\dif x\overset{\cref{eq:limitingdistn-1}}=0\]
 and so 
 \[\mathbb{P}\left(\int_{\mathcal{X}}\mathbf{1}\left\{\lim_{T\to\infty}v_{0}^{\ssup T}(x)\not =\tilde{v}(x)\right\}\,\dif x\ne 0\right) = 0.
 \]

\begin{prop}
\label{prop:Lpconvergence}Let $p\in[1,\infty)$. Recall the definition $w_{\mathcal{X};\xi}$
from \cref{eq:wdef}.
For any $\xi>0$, we have
\begin{equation}
\lim_{T\to\infty}\|\tilde{v}-v_{0}^{\ssup T}\|_{L_{w_{\mathcal{X};\xi}}^{p}(\mathcal{X})}=0\qquad\text{in probability.}\label{eq:convinprob}
\end{equation}
If we furthermore assume that $p\in(1,\infty)$ and that $\beta\in I_{L^{p}}$,
then we additionally have
\begin{equation}
\lim_{T\to\infty}\mathbb{E} \bigg[\|\tilde{v}-v_{0}^{\ssup T}\|_{L_{w_{\mathcal{X};\xi}}^{p}(\mathcal{X})}^{p}\bigg]=0.\label{eq:convinLp}
\end{equation}
\end{prop}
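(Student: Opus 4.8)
The plan is to condition on the $\sigma$-algebra $\mathscr{F}$ generated by $(\dif U_r)$ and exploit that, given $\mathscr{F}$, the $V$-stochastic integrals defining $v_0^{\ssup T}(x)$ and $\tilde v(x)$ are Gaussian. Since $V$ is independent of $U$ (by \cref{eq:uncorrelated} and joint Gaussianity) while ${Z}_{r,0}^{\ssup\beta}(y,x)$ is $\mathscr{F}$-measurable, the difference $\tilde v(x)-v_0^{\ssup T}(x)=\alpha\int_{-\infty}^{-T}\int_{\mathcal{X}}{Z}_{r,0}^{\ssup\beta}(y,x)\,\dif V_r(y)$ is, conditionally on $\mathscr{F}$, centered Gaussian with variance $g_T(x)\coloneqq [N(\delta_x)]_\infty-[N(\delta_x)]_T$. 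Hence $\mathbb{E}\big[|\tilde v(x)-v_0^{\ssup T}(x)|^p\mid\mathscr{F}\big]=c_p\,g_T(x)^{p/2}$, where $c_p=\mathbb{E}|Z|^p$ for a standard Gaussian $Z$, and by the conditional Tonelli theorem the whole problem reduces to controlling the weighted integrals $\Phi_T\coloneqq\int_{\mathcal{X}}g_T(x)^{p/2}w_{\mathcal{X};\xi}(x)^p\,\dif x$. I will use the identity $[N(\delta_x)]_\bullet=(\alpha/\beta)^2[M(\delta_x)]_\bullet$ from \cref{eq:QVrelation}, together with the fact that $g_T(x)$ decreases monotonically to $0$ as $T\to\infty$, since $[N(\delta_x)]_T\uparrow[N(\delta_x)]_\infty<\infty$ almost surely by \cref{eq:QVconverges}.

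For the $L^p$ statement \cref{eq:convinLp} (the case $\beta\in I_{L^{p}}$) I would instead take unconditional expectations. The Burkholder--Davis--Gundy inequality applied to the martingale increment $N_{T'}(\delta_x)-N_T(\delta_x)$, followed by $T'\to\infty$, gives $\mathbb{E}|\tilde v(x)-v_0^{\ssup T}(x)|^p\le C_p\,\mathbb{E}\,g_T(x)^{p/2}$, which tends to $0$ by \cref{eq:LpQVconvergence}. Moreover $\mathbb{E}\,g_T(x)^{p/2}\le\mathbb{E}[N(\delta_x)]_\infty^{p/2}=(\alpha/\beta)^p\mathbb{E}[M(\delta_x)]_\infty^{p/2}$, which is finite (this is exactly the finiteness underlying \cref{eq:LpQVconvergence}) and, by the spatial stationarity in \cref{lem:momentbd}, independent of $x$. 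Writing $\mathbb{E}\|\tilde v-v_0^{\ssup T}\|_{L_{w_{\mathcal{X};\xi}}^{p}(\mathcal{X})}^{p}=\int_{\mathcal{X}}\mathbb{E}|\tilde v(x)-v_0^{\ssup T}(x)|^p\,w_{\mathcal{X};\xi}(x)^p\,\dif x$ via Tonelli, the integrand is dominated by the integrable function $C\,w_{\mathcal{X};\xi}(x)^p$ and tends to $0$ pointwise, so dominated convergence yields \cref{eq:convinLp}.

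For the convergence in probability \cref{eq:convinprob}, valid for all $p\in[1,\infty)$ with no moment assumption, I would argue pathwise in $\mathscr{F}$ (noting that $\|\cdot\|_{L_{w_{\mathcal{X};\xi}}^{p}}\to0$ in probability is equivalent to its $p$th power tending to $0$ in probability). By the conditional Markov inequality, $\mathbb{P}\big(\|\tilde v-v_0^{\ssup T}\|_{L_{w_{\mathcal{X};\xi}}^{p}(\mathcal{X})}^{p}>\eps\mid\mathscr{F}\big)\le 1\wedge(\eps^{-1}c_p\Phi_T)$, so by bounded convergence it suffices to show $\Phi_T\to0$ almost surely. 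Since $g_T(x)^{p/2}\downarrow0$ pointwise in $x$, this follows from the (decreasing) monotone convergence theorem provided the majorant $\Phi_0=\int_{\mathcal{X}}[N(\delta_x)]_\infty^{p/2}w_{\mathcal{X};\xi}(x)^p\,\dif x$ is finite almost surely; equivalently, that $\tilde v\in L_{w_{\mathcal{X};\xi}}^{p}(\mathcal{X})$ a.s., which also supplies the integrability assertion in \cref{thm:convergence}.

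The main obstacle is precisely this almost-sure finiteness of $\Phi_0$. In the strong-disorder regime $[M(\delta_x)]_\infty$ has the law of the first hitting time of $0$ by a Brownian motion started at $1$, so $\mathbb{E}[M(\delta_x)]_\infty^{p/2}=\infty$ for every $p\ge1$ (consistent with the Cauchy marginals, which have infinite mean); thus Tonelli is unavailable and one cannot argue in expectation. Instead I would bound $[N(\delta_x)]_\infty$ pathwise: for each fixed $x$ it is finite a.s.\ by \cref{eq:QVconverges}, and to obtain finiteness of the weighted spatial integral I would combine the a.s.\ spatial regularity of $x\mapsto[M(\delta_x)]_\infty$ (local boundedness via continuity) with the a.s.\ propagator bounds \cref{eq:Zpinftybd,eq:Zpqbd} to control the growth of $[M(\delta_x)]_\infty$ as $|x|\to\infty$ against the exponential weight $w_{\mathcal{X};\xi}$. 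Concretely, using $0\le R\le\|R\|_\infty\mathbf{1}_{\{|\cdot|\le L\}}$ one estimates the inner double integral $\iint_{\mathcal{X}^2}{Z}_{r,0}^{\ssup\beta}(y_1,x){Z}_{r,0}^{\ssup\beta}(y_2,x)R(y_1-y_2)\,\dif y_1\,\dif y_2$ by the weighted-$L^q$ and weighted-supremum quantities appearing in \cref{eq:Zpqbd,eq:Zpinftybd}; the genuinely delicate point is to show the resulting bound is integrable in $r$ over $(-\infty,0]$ uniformly enough to survive the spatial integration, i.e.\ that the time integral defining $[N(\delta_x)]_\infty$ converges after weighting.
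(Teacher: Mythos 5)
Your reduction of the problem is sound and largely coincides with the paper's own strategy: conditioning on $\mathscr{F}$, using conditional Gaussianity to express the conditional $p$th moment through the quadratic-variation tail $g_T(x)=[N(\delta_x)]_\infty-[N(\delta_x)]_T$, and then the conditional Markov inequality plus bounded convergence, which reduces \cref{eq:convinprob} to the almost-sure statement $\Phi_T\to0$. Your proof of \cref{eq:convinLp} (BDG, \cref{eq:LpQVconvergence}, spatial stationarity, dominated convergence) is correct and essentially the paper's. However, your proof of \cref{eq:convinprob} is not complete: as you yourself observe, everything hinges on the almost-sure finiteness of $\Phi_{T_0}=\int_{\mathcal{X}}g_{T_0}(x)^{p/2}w_{\mathcal{X};\xi}(x)^p\,\dif x$ for some finite $T_0$ (Tonelli being unavailable in strong disorder, as you correctly note), and you leave precisely this step --- your ``genuinely delicate point'' --- unproved. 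Your fallback sketch (continuity of $x\mapsto[M(\delta_x)]_\infty$ together with the propagator bounds \cref{eq:Zpinftybd,eq:Zpqbd}) does not close it: those bounds control ${Z}^{\ssup\beta}_{s,t}$ only for a \emph{fixed} pair of times $s<t$, whereas $g_{T_0}(x)$ integrates the propagator over all $r\in(-\infty,-T_0]$, so one needs a uniformity in $r$ that these lemmas do not supply; controlling the spatial growth of $x\mapsto[M(\delta_x)]_\infty$ across the whole infinite time horizon is exactly the hard part of the proposition.

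The paper closes this gap with one idea that your proposal is missing: the Chapman--Kolmogorov identity \cref{eq:CKthing} applied at the intermediate time $-1$. For $r\le-1$, the $L^1$/$L^\infty$ H\"older inequality gives
\begin{equation*}
{Z}_{r,0}^{\ssup\beta}(y,x)\le\left(\sup_{z\in\mathcal{X}}\frac{{Z}_{-1,0}^{\ssup\beta}(z,x)}{w_{\mathcal{X};\xi/2}(z)}\right)\int_{\mathcal{X}}w_{\mathcal{X};\xi/2}(z){Z}_{r,-1}^{\ssup\beta}(y,z)\,\dif z .
\end{equation*}
Substituting this into the double integral defining the quadratic-variation tail and using \cref{eq:QVrelation} (shifted by one unit of time) factorizes the conditional expectation, for $T\ge1$, as
\begin{equation*}
\mathbb{E}\big[\|\tilde v-v_0^{\ssup T}\|^p_{L^p_{w_{\mathcal{X};\xi}}(\mathcal{X})}\,\big|\,\mathscr{F}\big]\le C_p\alpha^p\left([\tilde M(w_{\mathcal{X};\xi/2})]_\infty-[\tilde M(w_{\mathcal{X};\xi/2})]_T\right)^{p/2}\int_{\mathcal{X}}\left(w_{\mathcal{X};\xi}(x)\sup_{z\in\mathcal{X}}\frac{{Z}_{-1,0}^{\ssup\beta}(z,x)}{w_{\mathcal{X};\xi/2}(z)}\right)^p\dif x,
\end{equation*}
where $\tilde M_T(\mu)\coloneqq\int_{\mathcal{X}}u_{-1}^{\ssup T}(z)\,\dif\mu(z)$ and $w_{\mathcal{X};\xi/2}$ is viewed as a \emph{finite} measure (it has finite total mass both on the torus and on $\mathbb{R}^d$). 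This decouples the $x$-dependence from the $T$-dependence: the spatial integral is an almost-surely finite random variable independent of $T$, by \cref{eq:Zpinftybd}, while the prefactor is a scalar independent of $x$ --- the quadratic-variation tail of a \emph{single} martingale --- which tends to $0$ almost surely by \cref{prop:Mconverges}. Your conditional-Markov and bounded-convergence argument then finishes the proof exactly as you planned. The point is that no spatial regularity or growth control of $[M(\delta_x)]_\infty$ is ever needed: the supremum over $z$ at the fixed time pair $(-1,0)$ absorbs all the $x$-dependence, and the infinite time horizon is handled by one scalar martingale tested against the finite measure $w_{\mathcal{X};\xi/2}$.
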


\begin{proof}
We first note that whenever $s<r<t$, we can write
\begin{equation}
    \begin{aligned}
    {Z}_{s,t}^{\ssup\beta}(y,x)\overset{\cref{eq:CKthing}}&{=}\int{Z}_{s,r}^{\ssup\beta}(y,z){Z}_{r,t}^{\ssup\beta}(z,x)\,\dif z\\&\le\left(\sup_{z\in\mathcal{X}}\frac{{Z}_{r,t}^{\ssup\beta}(z,x)}{w_{\mathcal{X};\xi/2}}(z)\right)\int w_{\mathcal{X};\xi/2}(z){Z}_{s,r}^{\ssup\beta}(y,z)\,\dif z\end{aligned}\label{eq:applyCK}
\end{equation}
by the $L^{1}$/$L^{\infty}$ Hölder inequality. Subtracting \cref{eq:vDuhamel}
from \cref{eq:vtildedef}, we see that
\[
\tilde{v}(x)-v_{0}^{\ssup T}(x)=\alpha\int_{-\infty}^{-T}\int_{\mathcal{X}}{Z}_{r,0}^{\ssup\beta}(y,x)\,\dif V_{r}(y).
\]
Therefore, we have
\[
\|\tilde{v}-v_{0}^{\ssup T}\|_{L_{w_{\mathcal{X};\xi}}^{p}(\mathcal{X})}^{p}=\alpha^{p}\int_{\mathcal{X}}\left|\int_{-\infty}^{-T}\int_{\mathcal{X}}{Z}_{r,0}^{\ssup\beta}(y,x)\,\dif V_{r}(y)\right|^{p}w_{\mathcal{X};\xi}(x)^{p}\,\dif x.
\]
Taking expectation with respect to $(V_{r}(y))_{r,y}$ (and recalling that $\mathscr{F}$ is the $\sigma$-algebra generated by $(\dif U_r)_{r\in\mathbb{R}}$)
, we get
\begin{equation}\label{eq:takecondexp}\begin{aligned}
&\mathbb{E}\bigg[\|\tilde{v}-v_{0}^{\ssup T}\|_{L_{w_{\mathcal{X};\xi}}^{p}(\mathcal{X})}^{p}\bigg|\mathscr{F}\bigg]
\\
&=\alpha^{p}\int_{\mathcal{X}}\mathbb{E}\bigg[\,\, \bigg|\int_{-\infty}^{-T}\int_{\mathcal{X}}{Z}_{r,0}^{\ssup\beta}(y,x)\,\dif V_{r}(y)\bigg|^{p} \,\, \bigg|\,\, \mathscr{F}\bigg]w_{\mathcal{X};\xi}(x)^{p}\,\dif x
\\
&=C_p\alpha^{p}\int_{\mathcal{X}}\left(\int_{-\infty}^{-T}\iint_{\mathcal{X}^{2}}{Z}_{r,0}^{\ssup\beta}(y_{1},x){Z}_{r,0}^{\ssup\beta}(y_{2},x)R(y_{1}-y_{2})\,\dif y_{1}\,\dif y_{2}\,\dif r\right)^{p/2}w_{\mathcal{X};\xi}(x)^{p}\,\dif x,
\end{aligned}
\end{equation}
where in the second identity we used the Gaussianity of the noise, and the constant $C_p<\infty$ depends only on $p$.
For $T\ge1$, we can estimate the inside integral in \cref{eq:takecondexp}
by
\begin{align*}
&\int_{-\infty}^{-T}  \iint_{\mathcal{X}^{2}}{Z}_{r,0}^{\ssup\beta}(y_{1},x){Z}_{r,0}^{\ssup\beta}(y_{2},x)R(y_{1}-y_{2})\,\dif y_{1}\,\dif y_{2}\,\dif r\\
 &\ \overset{\cref{eq:applyCK}}{\le}\left(\sup_{z\in\mathcal{X}}\frac{{Z}_{-1,0}^{\ssup\beta}(z,x)}{w_{\mathcal{X};\xi/2}}(z)\right)^{2}\int_{-\infty}^{-T}\iint_{\mathcal{X}^{2}}\left(\prod_{i=1}^{2}\int_{\mathcal{X}}{Z}_{r,-1}^{\ssup\beta}(y_{i},z_{i})w_{\mathcal{X};\xi/2}(z_{i})\,\dif z_{i}\right)R(y_{1}-y_{2})\,\dif y_{1}\,\dif y_{2}\,\dif r\\
 &\ \overset{\cref{eq:QVrelation}}{=}\left(\sup_{z\in\mathcal{X}}\frac{{Z}_{-1,0}^{\ssup\beta}(z,x)}{w_{\mathcal{X};\xi/2}}(z)\right)^{2}\left([\tilde{M}(w_{\mathcal{X};\xi/2})]_{\infty}-[\tilde{M}(w_{\mathcal{X};\xi/2})]_{T}\right),
\end{align*}
where $\tilde{M}_{T}(\mu)\coloneqq\int_{\mathcal{X}}u_{-1}^{\ssup T}(x)\,\dif\mu(x)$
has the same distribution as $M_{T}(\mu)$. In the last identity of the above display, we have shifted time by $1$ in order to apply \cref{eq:QVrelation}, which applies equally well to $\tilde{M}$ as to $M$. Using the last estimate
in \cref{eq:takecondexp}, we get
\begin{align*}
    \mathbb{E}[\|\tilde{v}-v_{0}^{\ssup T}\|_{L_{w_{\mathcal{X};\xi}}^{p}(\mathcal{X})}^{p}\mid\mathscr{F}] & \le C_p \alpha^{p}\left([\tilde{M}(w_{\mathcal{X};\xi/2})]_{\infty}-[\tilde{M}(w_{\mathcal{X};\xi/2})]_{T}\right)^{p/2}\\
 & \qquad\times\int_{\mathcal{X}}\left(w_{\mathcal{X};\xi}(x)\sup_{z\in\mathcal{X}}\frac{{Z}_{-1,0}^{\ssup\beta}(z,x)}{w_{\mathcal{X};\xi/2}}(z)\right)^{p}\,\dif x.
\end{align*}
The last integral is finite a.s. by \cref{eq:Zpinftybd}. By \cref{eq:QVconverges}
(with time shifted by $-1$), this implies that 
\[
\lim_{T\to\infty}\mathbb{E}[\|\tilde{v}-v_{0}^{\ssup T}\|_{L_{w_{\mathcal{X};\xi}}^{p}(\mathcal{X})}^{p}\mid\mathscr{F}]=0\qquad\text{a.s.}
\]
By the conditional Markov inequality, this means that, for any fixed
$\eps>0$, we have
\begin{align*}
    \mathbb{P}[\|\tilde{v}-v_{0}^{\ssup T}\|_{L_{w_{\mathcal{X};\xi}}^{p}(\mathcal{X})}\ge\eps\mid\mathscr{F}] & =\mathbb{P}[\|\tilde{v}-v_{0}^{\ssup T}\|_{L_{w_{\mathcal{X};\xi}}^{p}(\mathcal{X})}^{p}\ge\eps^{p}\mid\mathscr{F}]\\&\le\eps^{-p}\mathbb{E}[\|\tilde{v}-v_{0}^{\ssup T}\|_{L_{w_{\mathcal{X};\xi}}^{p}(\mathcal{X})}^{p}\mid\mathscr{F}],
\end{align*}
so, for each $\eps>0$, we have
\[
\lim_{T\to\infty}\mathbb{P}[\|\tilde{v}-v_{0}^{\ssup T}\|_{L_{w_{\mathcal{X};\xi}}^{p}(\mathcal{X})}\ge\eps\mid\mathscr{F}]=0\qquad\text{a.s.}
\]
By the bounded convergence theorem, this means that 
\[
\lim_{T\to\infty}\mathbb{P}\big[\|\tilde{v}-v_{0}^{\ssup T}\|_{L_{w_{\mathcal{X};\xi}}^{p}(\mathcal{X})}\ge\eps\big]=\lim_{T\to\infty}\mathbb{E}\left[\mathbb{P}\big[\|\tilde{v}-v_{0}^{\ssup T}\|_{L_{w_{\mathcal{X};\xi}}^{p}(\mathcal{X})}\ge\eps\mid\mathscr{F}\big]\right]=0,
\]
and hence \cref{eq:convinprob} holds.

To prove \cref{eq:convinLp}, we use \cref{eq:QVrelation}
in \cref{eq:takecondexp} to see that
\[
    \mathbb{E}\bigg[\|\tilde{v}-v_{0}^{\ssup T}\|_{L_{w_{\mathcal{X};\xi}}^{p}(\mathcal{X})}^{p}\ \bigg|\ \mathscr{F}\bigg]=C_p\alpha^{p}\int_{\mathcal{X}}\big([M(\delta_{x})]_{\infty}-[M(\delta_{x})]_{T}\big)^{p/2}w_{\mathcal{X};\xi}(x)^{p}\,\dif x.
\]
Taking expectations and using the spatial stationarity of the noise,
we obtain
\[
\mathbb{E}\big[\|\tilde{v}-v_{0}^{\ssup T}\|_{L_{w_{\mathcal{X};\xi}}^{p}(\mathcal{X})}^{p}\big]=C_p\alpha^{p}\left(\int_{\mathcal{X}}w_{\mathcal{X};\xi}(x)^{p}\,\dif x\right)\mathbb{E}\big([M(\delta_{0})]_{\infty}-[M(\delta_{0})]_{T}\big)^{p/2}.
\]
The integral is finite by the definition \cref{eq:weightednormdef}
of $w_{\mathcal{X};\xi}$, and the expectation goes to $0$ as $T\to\infty$
by \cref{eq:LpQVconvergence} (which holds by the assumption
that $\beta\in I_{L^{p}}$).
\end{proof}

\begin{prop}
\label{prop:stationarity}The random function $\tilde{v}$ defined
in \cref{eq:vtildedef} is statistically stationary for the SPDE
\cref{eq:theSPDE}.
\end{prop}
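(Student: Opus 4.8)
The plan is to extend the time-zero field of \cref{eq:vtildedef} to a full space-time process and then verify the two properties required of a space-time stationary solution: that it solves \cref{eq:theSPDE}, and that its law is invariant under joint space-time translations. For $t\in\R$ and $x\in\mathcal{X}$ I would set
\[
\tilde{v}_t(x)=\alpha\int_{-\infty}^t\int_{\mathcal{X}}{Z}_{r,t}^{\ssup\beta}(y,x)\,\dif V_r(y),
\]
which reduces to \cref{eq:vtildedef} at $t=0$. The moment bound \cref{lem:momentbd}, together with the integrability estimates \cref{eq:Zpinftybd,eq:Zpqbd}, guarantees that this stochastic integral over the infinite horizon $(-\infty,t]$ is well defined and that $\tilde v_t$ lies in the weighted $L^p$ spaces appearing in \cref{thm:convergence}, since its conditional (on $\mathscr F$) variance is controlled by the quadratic-variation quantity studied in \cref{sec:onept}.

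To show that $\tilde v$ solves \cref{eq:theSPDE}, I would verify the Duhamel identity \cref{eq:Duhamel} directly. Fix $s<t$ and split the defining integral at time $s$. On the piece over $(-\infty,s]$, insert the Chapman--Kolmogorov relation \cref{eq:CKthing} to write ${Z}_{r,t}^{\ssup\beta}(y,x)=\int_{\mathcal{X}}{Z}_{r,s}^{\ssup\beta}(y,z){Z}_{s,t}^{\ssup\beta}(z,x)\,\dif z$. A stochastic Fubini argument (legitimate by the integrability of \cref{eq:Zpqbd}, using that ${Z}_{s,t}^{\ssup\beta}(z,x)$ is $\mathscr H_{s,t}$-measurable while the $V$-integral over $(-\infty,s]$ is $\mathscr H_{-\infty,s}$-measurable) pulls ${Z}_{s,t}^{\ssup\beta}(z,x)$ outside the $V$-integral and identifies the remaining factor as $\tilde v_s(z)$, yielding
\[
\tilde v_t(x)=\int_{\mathcal{X}}{Z}_{s,t}^{\ssup\beta}(z,x)\,\tilde v_s(z)\,\dif z+\alpha\int_s^t\int_{\mathcal{X}}{Z}_{r,t}^{\ssup\beta}(y,x)\,\dif V_r(y),
\]
which is exactly \cref{eq:Duhamel}; hence $\tilde v$ is a solution of \cref{eq:theSPDE}.

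For translation invariance, the key structural observation is a covariance property of the propagator under noise shifts. Let $\theta_{a,\tau}$ shift the driving noise by $a\in\mathcal{X}$ in space and $\tau\in\R$ in time, so that the shifted noise has increments $\dif U_{t-\tau}(\cdot-a)$ and $\dif V_{t-\tau}(\cdot-a)$, and let $Z^{\theta}_{r,t}(y,x)$ denote the propagator built from the shifted $U$. Substituting $W_{r,t}(y,x)\coloneqq{Z}_{r-\tau,t-\tau}^{\ssup\beta}(y-a,x-a)$ into \cref{eq:Zeqn,eq:Zicdelta}, and using that $\Delta_x$ and the initial delta are translation invariant while $\dif U_{t-\tau}(x-a)$ is precisely the shifted increment, shows that $W$ solves the shifted equation; by uniqueness $Z^{\theta}_{r,t}(y,x)={Z}_{r-\tau,t-\tau}^{\ssup\beta}(y-a,x-a)$. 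Inserting this identity together with $\dif V_{t-\tau}(\cdot-a)$ into the definition of the field built from $\theta_{a,\tau}$-shifted noise, and changing variables $r\mapsto r-\tau$, $y\mapsto y-a$, gives the pathwise identity $\tilde v^{\theta}_t(x)=\tilde v_{t-\tau}(x-a)$. Since $(\dif U,\dif V)$ is homogeneous in space and time by \cref{eq:UVQV,eq:uncorrelated}, the shifted noise has the same law as the original, so $\tilde v^{\theta}\overset{\mathrm{d}}{=}\tilde v$; combined with the pathwise identity this yields $(\tilde v_{t+\tau}(x+a))_{t,x}\overset{\mathrm{d}}{=}(\tilde v_t(x))_{t,x}$ for every $(a,\tau)$, which is the asserted space-time stationarity.

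I expect the main obstacle to be the analytic bookkeeping in the Duhamel-consistency step: justifying the stochastic Fubini interchange and the convergence of the $V$-integral over the infinite horizon $(-\infty,s]$, both of which rest on invoking \cref{lem:momentbd} and \cref{eq:Zpinftybd,eq:Zpqbd} to bound the conditional quadratic variation uniformly in the time truncation. By contrast, the translation-invariance argument is conceptually soft once the covariance identity for ${Z}^{\ssup\beta}$ is recorded, so essentially all of the genuine work lies in the first part.
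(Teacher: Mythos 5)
Your proposal is correct in its essentials, but it takes a genuinely different route from the paper's proof. The paper never writes an explicit formula for the process at positive times: it takes $\tilde v_t$ to be the SPDE solution started from $\tilde v_0=\tilde v$, subtracts two copies of the Duhamel formula \cref{eq:Duhamel} to get $\tilde v_t-v_t^{\ssup T}(x)=\int_{\mathcal{X}}Z^{\ssup\beta}_{0,t}(y,x)(\tilde v_0-v_0^{\ssup T})(y)\,\dif y$, and uses the weighted H\"older bound \cref{eq:Zpqbd} to conclude $v_t^{\ssup T}\to\tilde v_t$ in probability in $L^p_{w_{\mathcal{X};2\xi}}(\mathcal{X})$; since $v_t^{\ssup T}\overset{\mathrm{law}}{=}v_0^{\ssup{T+t}}$ by time homogeneity of the noise and $v_0^{\ssup{T+t}}\to\tilde v$ in probability by \cref{eq:convinprob}, the equality of laws $\tilde v_t\overset{\mathrm{law}}{=}\tilde v_0$ follows by combining the three limits. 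That argument is soft once \cref{prop:Lpconvergence} is available (and it is needed for \cref{thm:convergence} anyway), and it requires neither an infinite-horizon stochastic Fubini interchange nor any shift-covariance identity for the propagator. Your route instead (a) defines the candidate stationary solution by an explicit formula at every time, (b) verifies the Duhamel identity via Chapman--Kolmogorov \cref{eq:CKthing} plus a stochastic Fubini step (legitimately justified by conditioning on $\mathscr F$, since $Z^{\ssup\beta}_{s,t}$ is independent of the $V$-noise), and (c) deduces stationarity from the exact pathwise shift covariance $Z^{\theta}_{r,t}(y,x)=Z^{\ssup\beta}_{r-\tau,t-\tau}(y-a,x-a)$ together with the distributional homogeneity of $(\dif U,\dif V)$. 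What your approach buys: the pathwise covariance gives invariance of the \emph{full space-time law} under joint translations in one stroke, which is stronger than the one-time marginal identity the paper's proof extracts and is closer to the literal statement of \cref{thm:convergence}; it also makes no use of the convergence statement \cref{eq:convinprob}. What it costs: you must redo the conditional second-moment estimates needed both for the a.s.\ convergence of the $(-\infty,s]$ integrals and for the Fubini interchange --- work that substantially overlaps with the computations in the paper's proof of \cref{prop:Lpconvergence}, which the paper's softer argument simply reuses.
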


\begin{proof}
Let $(\tilde{v}_{t}(x))_{t\ge0,x\in\mathcal{X}}$ solve \cref{eq:theSPDE}
with initial condition $\tilde{v}_{0}=\tilde{v}$. By \cref{eq:convinprob},
we have for any $\eps>0$ and $\xi>0$ that
\begin{equation}
    \lim_{T\to\infty}\mathbb{P}\left(\|\tilde{v}_{0}-v_{0}^{\ssup T}\|_{L_{w_{\mathcal{X};\xi}}^{p}(\mathcal{X})}\ge\eps\right)=0.\label{eq:compare-with-stationary}
\end{equation}
Subtracting two copies of \cref{eq:Duhamel}, we have
\[
    (\tilde v_t-v_t^{\ssup T})(x) = \int_{\mathcal{X}}{Z}^{\ssup\beta}_{0,t}(y,x)(\tilde v_0-v_0^{\ssup T})(y)\,\dif y.
\]
From this we can derive
\begin{align*}
    \|&\tilde{v}_{t}-v_{t}^{\ssup T}\|_{L_{w_{\mathcal{X};2\xi}}^{p}(\mathcal{X})}\\	&=\left(\int_{\mathcal{X}}\left|\int_{\mathcal{X}}{Z}_{0,t}^{\ssup\beta}(y,x)(\tilde{v}_{0}-v_{0}^{\ssup T})(y)\,\dif y\right|^{p}w_{\mathcal{X};2\xi}(x)^{p}\,\dif x\right)^{1/p}\\
                                                                                &\le\left(\int_{\mathcal{X}}\left(\int_{\mathcal{X}}\left(\frac{{Z}_{0,t}^{\ssup \beta}(y,x)}{{w}_{\mathcal{X};\xi}(y)}\right)^{q}\,\dif y\right)^{p/q}\left(\int_{\mathcal{X}}\left|(\tilde{v}_{0}-v_{0}^{\ssup T})(y)\right|^{p}{w}_{\mathcal{X};\xi}(y)^{p}\,\dif y\right)w_{\mathcal{X};2\xi}(x)^{p}\,\dif x\right)^{1/p}\\
                                                                                &=\left(\int_{\mathcal{X}}\left(\int_{\mathcal{X}}\left(\frac{{Z}_{0,t}^{\ssup\beta}(y,x)}{{w}_{\mathcal{X};\xi}(y)}\right)^{q}\,\dif y\right)^{p/q}w_{\mathcal{X};2\xi}(x)^{p}\,\dif x\right)^{1/p}\|\tilde{v}_{0}-v_{0}^{\ssup T}\|_{L_{w_{\mathcal{X};\xi}}^{p}(\mathcal{X})},
    \end{align*}
    The last integral is finite a.s. by \cref{eq:Zpqbd}.
    This along with \cref{eq:compare-with-stationary} implies
that for every $\eps>0$ and $t\ge0$ we have 
\[
\lim_{T\to\infty}\mathbb{P}\left(\|\tilde{v}_{t}-v_{t}^{\ssup T}\|_{L_{w_{\mathcal{X};2\xi}}^{p}(\mathcal{X})}\ge\eps\right)=0.
\]
We note that \[v_{t}^{\ssup T}\overset{\mathrm{law}}{=}v_{0}^{\ssup {T+t}}\]
and that
\[
\lim_{T\to\infty}\mathbb{P}\left(\|v_{0}^{\ssup T}-v_{0}^{\ssup{T+t}}\|_{L_{w_{\mathcal{X};2\xi}}^{p}(\mathcal{X})}\ge\eps\right)=0
\]
by \cref{eq:convinprob}. Combining the last three displays,
we see that $\tilde{v}_{0}\overset{\mathrm{law}}{=}\tilde{v}_{t}$,
which implies the desired stationarity.
\end{proof}

Now we can prove \cref{thm:convergence,thm:stsproperties}.
\begin{proof}[Proof of \cref{thm:convergence,thm:stsproperties}]
    \cref{thm:convergence} is the combination of \cref{eq:convinprob,prop:stationarity}.
    The statement \cref{eq:Lpstatement} of \cref{thm:stsproperties} has been proved as \cref{eq:convinLp} of \cref{prop:Lpconvergence}.
    Finally, the statement \cref{eq:Cauchystatement} of \cref{thm:stsproperties} follows from \cref{eq:limitingdistn-1,eq:vtildedef} along with \cref{prop:Cauchylimits} applied with $\mu = \sum_{i=1}^n \gamma_i\delta_{x_i}$: the assumption that $M_\mu(\infty)=0$ is satisfied because $M_\mu(T) = \sum_{i=1}^n \gamma_i M_{\delta_{x_i}}(T)$ and each of the finitely many terms on the right side goes to $0$ as $T\to\infty$ by the assumption.
\end{proof}

\appendix
    \section{The phase transition}
    \label{sec:phase-transition}
In this section we give references for the statements in \cref{prop:phase-transition}.
In the case $d\ge 3$ and $\mathcal{X} = \mathbb{R}^d$, \crefrange{enu:weak}{enu:dge3} of the proposition have been proved as \cite[Theorems~2.1 and~2.4 and Corollary~2.5]{MSZ16}, and \cref{enu:ILprelation} has been proved as \cite[Theorem 2.3]{RLM22}, which extends the work \cite{Jun22b} in the discrete setting.

In the remaining cases, the claim is that $I_{\mathrm{strong}} = (0,\infty)$, and to prove the proposition in this case it is sufficient to show that $u^{\ssup T}_0(0)$ converges in distribution to $0$ as $T\to\infty$, or equivalently that $u^{\ssup 0}_t(0)$ converges to $0$ as $t\to \infty$.
In the case $\mathcal{X}=(\mathbb{R}/\mathbb{Z})^d$, this is a consequence of \cite[Theorem~1.1]{GK23}. In the case $R=\delta$ and $\mathcal{X}=\mathbf{R}$, then it can be seen from \cite[Theorem~2.2]{QS23} (where the relevant piece is that the Itô factor must be added to get convergence: without this addition the solution goes to zero). See also \cite{LDC12,Vir20}.

In the case when $R$ is smooth and $\mathcal{X} = \mathbf{R}^d$ for $d=1,2$, we are not aware of a proof of this precise statement in the literature. The problem is essentially the same as the Brownian polymer with Poisson forcing considered in \cite{CY05}. For completeness, we sketch a proof of the statement here, following the proof of \cite[Theorem~2.1.1(b)]{CY05}.

\begin{proof}[Proof of \cref{prop:phase-transition}\textup{(\ref{enu:dle2})} in the case $\mathcal{X}=\mathbb{R}^d$, $d=1,2$]
Let \[M_T \coloneqq M_T(\delta_0) = u_0^{\ssup T}(0),\] with the middle quantity defined in \cref{eq:Mdef}.
    Fix $\theta\in (0,1)$ and define $m(T) \coloneqq \mathbb{E}[M_T^\theta]$. By \cref{eq:MmuIto} and Itô's formula, we can compute
    \begin{align}
        m'(T) &= -\frac12\beta^2\theta(1-\theta)\mathbb{E}\left[M_T^{\theta-2}\iint_{\mathcal{X}^2}Z^{\ssup\beta}_{-T,0}(y_1,0)Z^{\ssup\beta}_{-T,0}(y_2,\beta)R(y_1-y_2)\,\dif y_1\,\dif y_2\right]\notag\\
              &= -\frac12\beta^2\theta(1-\theta)\mathbb{E}\left[M_T^{\theta}\frac{\iint_{\mathcal{X}^2}Z^{\ssup\beta}_{-T,0}(y_1,0)Z^{\ssup\beta}_{-T,0}(y_2,\beta)R(y_1-y_2)\,\dif y_1\,\dif y_2}{\left(\int_{\mathcal{X}}Z^{\ssup\beta}_{-T,0}(y,0)\,\dif y\right)^2}\right]\label{eq:writemprime}
\end{align}
Fix $\ell_T>\operatorname{diam}\supp\phi$, to be chosen later, and let $\Lambda_T$ be the ball of radius $\ell_T$ about the origin in $\mathbb{R}^d$.  %
Let $|\Lambda_T|$ denote the Lebesgue measure of $\Lambda_T$. We can estimate the integral as
\begin{align*}
    \iint_{\mathcal{X}^2}&Z^{\ssup\beta}_{-T,0}(y_1,0)Z^{\ssup\beta}_{-T,0}(y_2,\beta)R(y_1-y_2)\,\dif y_1\,\dif y_2\\&\ge \iint_{\Lambda_T^2} Z^{\ssup\beta}_{-T,0}(y_1,0)Z^{\ssup\beta}_{-T,0}(y_2,\beta)R(y_1-y_2)\,\dif y_1\,\dif y_2\\
                                                                                                                   &\ge \frac1{|\Lambda_T|}\left(\iint_{\Lambda_T^2} Z_{-T,0}^{\ssup\beta}(y,0)\phi(y-z)\,\dif z\,\dif y\right)^2\\
                                                                                                                   &\ge \frac{\left(\int_{\mathcal{X}}\phi(z)\,\dif z\right)^2}{|\Lambda_T|}\left(\int_{\widetilde{\Lambda}_T} Z_{-T,0}^{\ssup\beta}(y,0)\,\dif y\right)^2,
\end{align*}
where we have defined $\widetilde\Lambda_T$ to be the ball of radius $\ell_T-\operatorname{diam}\supp\phi$ about the origin in $\mathbb{R}^d$.
From this we estimate
\begin{align*}
    &\frac{\iint_{\mathcal{X}^2}Z^{\ssup\beta}_{-T,0}(y_1,0)Z^{\ssup\beta}_{-T,0}(y_2,\beta)R(y_1-y_2)\,\dif y_1\,\dif y_2}{\left(\int_{\mathcal{X}}Z^{\ssup\beta}_{-T,0}(y,0)\,\dif y\right)^2}\\
    &\ge \frac{\left(\int_{\mathcal{X}}\phi(z)\,\dif z\right)^2}{|\Lambda_T|}\left(1-\frac{\int_{\widetilde{\Lambda}_T^{\mathrm{c}}} Z_{-T,0}^{\ssup\beta}(y,0)\,\dif y}{\int_{\mathcal{X}}Z^{\ssup\beta}_{-T,0}(y,0)\,\dif y}\right)^2\\
    &\ge\frac{\left(\int_{\mathcal{X}}\phi(z)\,\dif z\right)^2}{|\Lambda_T|}\left(1-2\left(\frac{\int_{\widetilde{\Lambda}_T^{\mathrm{c}}} Z_{-T,0}^{\ssup\beta}(y,0)\,\dif y}{\int_{\mathcal{X}}Z^{\ssup\beta}_{-T,0}(y,0)\,\dif y}\right)^\theta\right).
\end{align*}
Using this in \cref{eq:writemprime}, we get
\begin{align*}
    m'(T)&\le-\frac{\beta^2\theta(1-\theta)\left(\int_{\mathcal{X}}\phi(z)\,\dif z\right)^2}{2|\Lambda_T|}\mathbb{E}\left[M_T^{\theta} - 2\left(M_T \frac{\int_{\widetilde{\Lambda}_T^{\mathrm{c}}} Z_{-T,0}^{\ssup\beta}(y,0)\,\dif y}{\int_{\mathcal{X}}Z^{\ssup\beta}_{-T,0}(y,0)\,\dif y}\right)^\theta\right]\\
         &\le \frac{\beta^2\theta(1-\theta)\left(\int_{\mathcal{X}}\phi(z)\,\dif z\right)^2}{2|\Lambda_T|}\left[-m(T)+2\left(\mathbb{E}\int_{\widetilde{\Lambda}_T^{\mathrm{c}}} Z_{-T,0}^{\ssup\beta}(y,0)\,\dif y\right)^\theta\right]\\
         &= \frac{\beta^2\theta(1-\theta)\left(\int_{\mathcal{X}}\phi(z)\,\dif z\right)^2}{2|\Lambda_T|}\left[-m(T)+2(\mathrm{P}(|X(-T)|\ge \ell_T-\operatorname{diam}\supp\phi))^\theta\right],
\end{align*}
where $P$ denotes the probability measure of a standard two-sided Brownian motion $X$ with $X(0)=0$.
Therefore, the conclusion of \cite[Lemma~4.12]{CY05} holds, except that the ball $U(x)$ there is replaced by a ball of radius $\operatorname{diam}\supp\phi$, which makes no difference. The rest of the proof then proceeds exactly as on \cite[p.~271]{CY05} modulo this replacement.
\end{proof}

\bibliographystyle{hplain-ajd}
\bibliography{citations}

\end{document}